\newcommand{\R}{\mathbb{R}}
\newcommand{\N}{\mathbb{N}}
\newcommand{\M}{\mathcal{M}}
\newcommand{\I}{I}
\newcommand{\A}{\mathcal{A}}
\newcommand{\PV}{\text{P.V. }}
\newcommand{\lsc}{\text{LSC}}
\newcommand{\usc}{\text{USC}}
\newcommand{\Lomega}{L_{\omega_s}^1(\R^N)}
\newcommand{\dx}{\mathrm{d}x}
\newcommand{\dy}{\mathrm{d}y}
\newcommand{\dz}{\mathrm{d}z}
\newtheorem{theorem}{Theorem}
\newtheorem{lemma}[theorem]{Lemma}
\newtheorem{proposition}[theorem]{Proposition}
\newtheorem{conjecture}{Conjecture}
\newtheorem{definition}{Definition}
\newtheorem{remark}{Remark}
\title{The Landis conjecture for nonlocal elliptic operators: polynomial decay}
\author{Sebastián Flores Sepúlveda$^{1,\dagger}$}
\author{Gabrielle Nornberg$^{1,2,*}$}
\address{$^{1}$ Departamento de Ingeniería Matemática, Universidad de Chile, Santiago, Chile}
\address{$^{2}$ Centro de Modelamiento Matemático (CNRS IRL2807), Universidad de Chile, Santiago, Chile}
\address{$^\dagger$ sebastian.flores@dim.uchile.cl}
\address{$^*$ gnornberg@dim.uchile.cl}
\begin{document}

\begin{abstract}
	We obtain a unique continuation result at infinity for fully nonlinear elliptic integro-differential operators of order $2s$ which satisfy the maximum and minimum principles in bounded subdomains, under the decay assumption $o(|x|^{-(N+2s)})$ at infinity. 
	
	Our result is new even in the case of the fractional Laplacian, as it unveils the nonlocal nature of the decay in Landis conjecture, evolving from exponential to polynomial.
	
	\smallskip
\noindent \textbf{Keywords.} {Landis conjecture; integro-differential elliptic operators; polynomial decay.}
\smallskip
\noindent {\bf{MSC2020.}} {47G20,  45K05, 35B40, 35D40.}  
\end{abstract}

	\maketitle

\section{Introduction}

In \cite{kondratiev_qualitative_1988}, Landis and Kondratiev ask if an exponentially fast decaying solution to a linear Schrödinger equation in an exterior domain should be the trivial solution. This is known as the Landis conjecture and can be stated in the following manner:

\begin{conjecture}
Let $\Omega$ be either $\R^N$ or an exterior domain in $\R^N$, $V \in L^{\infty}(\R^N)$ with $\|V\|_{\infty} \leq 1$. If $u$ is a solution to 
\[\Delta u + Vu= 0 ~\text{ in }\Omega,\]
with $u(x) = O(e^{-\kappa |x|})$ as $|x| \rightarrow \infty$ for some $\kappa >1$, then $u \equiv 0$ in $\Omega$.
\end{conjecture}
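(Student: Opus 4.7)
The plan is to attack the conjecture by quantitative unique continuation at infinity, using Carleman estimates tailored to the exponential decay rate $\kappa > 1$. A first reduction is to localize: the strong unique continuation principle for $-\Delta + V$ with $V \in L^\infty$ ensures that if $u$ vanishes on some open ball inside $\Omega$, then $u \equiv 0$ in the connected component. It thus suffices to show that the hypothesis $u(x) = O(e^{-\kappa|x|})$ forces $u$ to vanish identically on some ball of arbitrarily large radius, which amounts to quantifying how fast a nontrivial solution can decay at infinity.

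The central step is to establish, for smooth functions $u$ supported in an annulus $\{R \leq |x| \leq 4R\}$ far from the origin, a Carleman inequality of the form
\[
\tau^{3} \int_{\R^N} e^{2\tau \varphi(x)} |u|^2 \, \dx \;\leq\; C \int_{\R^N} e^{2\tau \varphi(x)} |\Delta u|^2 \, \dx,
\]
for a suitable radial weight $\varphi$ and all $\tau$ large. Inserting the equation $\Delta u = -Vu$ and using $\|V\|_\infty \leq 1$ would absorb the right-hand side for $\tau$ large. Applied with a cutoff at scale $R$ and $2R$, the commutator terms supported in $\{|x|\sim R\}\cup\{|x|\sim 2R\}$ produce a three-ball (or doubling) inequality at infinity; feeding the pointwise exponential upper bound on $u$ into it yields a contradiction unless $u$ vanishes on the inner ball, closing the argument via the first reduction.

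The genuine obstacle is matching the weight $\varphi$ to the \emph{linear} exponential rate. Standard elliptic Carleman estimates with linear $\varphi(x) = c|x|$ fail to produce a positive commutator competing with a bounded potential, and as Meshkov observed the weight one is structurally forced to use is $\varphi(x) = |x|^{4/3}$, which only controls decay of order $e^{-c|x|^{4/3}}$. Meshkov's counterexample, with complex-valued $V$ in dimension two, then shows that the exponent $4/3$ is sharp \emph{without} exploiting reality of $V$. Bridging the gap from $4/3$ to the conjectured rate $\kappa > 1$ thus requires genuinely using the real-valued structure — for instance via positivity arguments in the spirit of Rossi, or through generalized principal eigenvalue / Liouville comparison techniques as in Sirakov--Souplet and Das--Pinchover. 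I expect the plan to stall precisely here: in dimension $N \geq 3$ with real $V$ the conjecture remains open, and this is precisely why the present paper instead addresses the \emph{nonlocal} counterpart, where the decay threshold downgrades from exponential to polynomial.
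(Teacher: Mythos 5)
The statement you were asked to prove is labelled as a \emph{conjecture} in the paper, and indeed it is: the paper does not prove it, and there is no proof to compare your attempt against. The paper only recalls the Landis conjecture as motivating background, notes Meshkov's complex-valued counterexample with decay $O(e^{-\kappa|x|^{4/3}})$ (showing $4/3$ is the sharp exponent for complex $V$), records the planar resolution of the weak version by Logunov--Malinnikova--Nadirashvili--Nazarov, and states explicitly that the question remains open in higher dimensions for real-valued potentials. The paper's own contribution is Theorem~\ref{theo:landis}, a \emph{nonlocal} Landis-type result where the decay threshold is polynomial, $o(|x|^{-(N+2s)})$, not exponential.

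Your proposal correctly diagnoses all of this. The Carleman strategy you outline is the standard one, the observation that the commutator positivity forces the weight $\varphi(x) = |x|^{4/3}$ rather than a linear weight is exactly the known obstruction, and your conclusion that bridging the gap from $4/3$ to $\kappa > 1$ for real $V$ in $N \geq 3$ is precisely the open problem is accurate. You also correctly point to the comparison/principal-eigenvalue route of Rossi, Sirakov--Souplet, and Das--Pinchover as the alternative line that yields partial results. Since the conjecture is open, your honest acknowledgment that the plan ``stalls'' at the $4/3$ barrier is the right assessment, not a flaw in your reasoning; had you instead claimed to close the argument with a linear Carleman weight, that would have been a genuine gap. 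As it stands, you have accurately identified the state of the art and the relationship between this conjecture and the theorem the paper actually proves.
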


A weaker version of this conjecture claims that if the solution $u$ decays faster than exponentially at infinity, that is, if $|u(x)| \leq Ce^{-|x|^{1+\varepsilon}}$ for some $C, \varepsilon >0$, then $u\equiv 0$. These questions are a kind of unique continuation principle at infinity. 

The conjecture was negatively  answered by Meshkov in \cite{meshkov_possible_1992}, where he showed a complex-valued potential and a nontrivial solution with decay $O(e^{-\kappa|x|^{4/3}})$ at infinity, for some $\kappa > 0$. He also proved that $4/3$ is optimal in the sense that if a solution behaves at infinity as $O(e^{-\kappa|x|^{4/3 + \varepsilon}})$ for some $\varepsilon>0$, it must be the trivial solution. The question for a real-valued potential has been widely studied in the last years. For the planar case,  in \cite{logunov_landis_2020} the so called weak Landis conjecture is proven: if a solution decays at infinity with order $O(e^{-\kappa|x|(\log|x|)^{1/2}})$ then it must be trivial; see also \cite{davey_landis_2017, davey_landis_2020, kenig_landis_2015, kenig_quantitative_2015}.

In higher dimensions, the question remains open. It has been studied in \cite{bourgain_localization_2005} using Carleman estimates, and in \cite{rossi_landis_2020} with comparison arguments. In \cite{sirakov_vazquez_2021}, Sirakov and Souplet established a Landis-type result up to unbounded coefficients for fully nonlinear second order operators which satisfy the maximum principle in bounded subdomains. A sharp decay using criticality theory was addressed in \cite{Pinchover}, in which polynomial decay appears in the presence of Hardy type decaying potentials.

The aim of this note is to address an analogous of the Landis conjecture for a class of integro-differential elliptic operators.  These operators are defined by 
\begin{equation}
\label{eq:linear}
Lu(x) = \PV \int_{\R^{N}} (u(x+y) - u(x)) K(y)\dy
\end{equation}
where $0<\frac{\lambda}{|y|^{N+2s}} \leq K(y) \leq \frac{\Lambda}{|y|^{N+2s}}$, for some fixed constants $s\in (0,1)$ and $0< \lambda \leq \Lambda$. In particular, when $\lambda = \Lambda = 1$ then $L$ is just the fractional Laplacian $(-\Delta)^s$  of order $2s$. For $s$, $\lambda$, $\Lambda$ fixed, we denote $\mathcal{L}$ to be the class of all operators $L$ in the form \eqref{eq:linear}. \smallskip

The Landis conjecture for the fractional Laplacian has been studied by Rüland and Wang in \cite{ruland_fractional_2019}, where they conclude a unique continuation principle at infinity for solutions of $(-\Delta)^su + Vu = 0$ under the assumption
\begin{equation}
\label{eq:integral_decay}
\int_{\R^N} e^{|x|^{\alpha}} |u(x)| \dx \leq C < \infty
\end{equation}
for some $\alpha>1$, whenever $V$ satisfies a regularity hypothesis given by $\sup_{x\in\R^N} |x \cdot D V(x) |\leq 1$. In \cite{kow_landis-type_2023}, Kow and Wang studied the question for $(-\Delta)^{1/2}u + b \cdot D u + Vu = 0$ in $\R^N$, and get a decay of $O(e^{-k|x|})$ under boundedness of the drift and potential terms, as well as $|D V|$. Note that this case is especially delicate since both $(-\Delta)^{1/2}$ and $b\cdot D u$ are of order 1. The proof of these results rely on the extension problem for the fractional Laplacian, together with the application of Carleman estimates. 

In this paper we extend the results in \cite{kow_landis-type_2023, ruland_fractional_2019} to fully nonlinear operators of order $2s$ which generalize  the class $\mathcal{L}$. 
Somewhat surprisingly, we are able to relax condition \eqref{eq:integral_decay}, by allowing the solution to have polynomial decay, without assuming any decaying properties of the potential. 

We consider an operator $I$ of the form:
\begin{equation}
\label{eq:isaacs}
Iu(x) = \sup_{a\in \A} L_{a}u(x), \text{ or } Iu(x) = \inf_{a\in \A} L_{a}u(x),
\end{equation}
where $\A$ is a set and $L_{a} \in \mathcal{L}$ for every $a\in \A$. 
By solution we mean a viscosity solution in the sense of Definition \ref{def:viscosity} ahead. 
As it is customary in the theory of integral operators, those solutions belong to the space of measurable functions $u$ such that $\|u\|_{\Lomega} < \infty$, where
\[\|u\|_{\Lomega} := \int_{\R^N} \frac{|u(x)|}{1+|x|^{N+2s}} \dx \, ;\]  such a space is denoted 
by $\Lomega$.

Lastly, as in \cite{Pinchover, rossi_landis_2020, sirakov_vazquez_2021}, an assumption on the sign of the operator $\I +V$ will be pivotal. In the sequel, $\lambda_1^\pm ( \I + V, G) >0$  stands for the validity of both maximum and minimum principles for the operator $\I +V$ in the bounded subdomain $G$ of $\R^N $, see Definition \ref{defLambda+-}.  
\smallskip 

Within the vocabulary established, our main result reads as follows.

\begin{theorem}
\label{theo:landis}
Let $s\in (0,1)$,  $I$ be an operator as in \eqref{eq:isaacs}, and $V:\R^N \rightarrow \R$ be a bounded continuous function.
Assume $\lambda_1^\pm ( I + V, G) >0$ for each bounded subdomain $G$ of $\R^N$. \smallskip
	 If $u\in \Lomega$ is a viscosity solution to $Iu + Vu = 0$ in $\R^N$, such that 
\begin{equation}\label{H}
\lim_{|x| \rightarrow \infty} |u(x)||x|^{N+2s} = 0,
\end{equation}
then $u\equiv 0$ in $\R^N$.
\end{theorem}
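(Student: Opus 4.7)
The plan is a barrier-and-maximum-principle argument, in the spirit of the local analogues \cite{rossi_landis_2020,sirakov_vazquez_2021}, adapted to the nonlocal setting. Since the kernels in $\mathcal{L}$ have tails of order $|y|^{-(N+2s)}$, the natural decay of positive solutions of $I+V$ at infinity is polynomial, and hypothesis \eqref{H} says exactly that $u$ vanishes at infinity strictly faster than the critical rate $|x|^{-(N+2s)}$. The goal is to build a positive function $w:\R^N\to\R$ with $(I+V)w\leq 0$ in the viscosity sense outside some large ball $B_{R_0}$ and with prescribed polynomial decay at infinity, then compare $u$ with arbitrarily small multiples $\epsilon w$, and let $\epsilon \to 0$.

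The heart of the proof is the barrier construction. I would produce $w$ as the Perron solution \cite{mou_perrons_2017} of the exterior Dirichlet problem $(I+V)w=0$ in $B_{R_0}^c$ with $w\equiv 1$ inside $B_{R_0}$; positivity of $w$ follows from $\lambda_1^\pm(I+V,G)>0$ on annular subdomains $G$ of $B_{R_0}^c$, and the decay at infinity from Martin-kernel asymptotics together with the nonlocal boundary Harnack estimates of \cite{ros-oton_boundary_2019}. As a heuristic preliminary, one can check using Pucci-extremal estimates for Isaacs operators together with the kernel bounds $\lambda|y|^{-(N+2s)}\leq K_a(y)\leq \Lambda|y|^{-(N+2s)}$ that the explicit ansatz $w_0(x)=(1+|x|^2)^{-(N+2s)/2}$ satisfies $|Iw_0(x)|\lesssim (1+|x|)^{-(N+2s)}$; although the $V w_0$ term is of the same order, it can be controlled by taking $|x|$ large and gluing $w_0$ to a positive interior function.

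With $w$ in hand, fix $\epsilon>0$ and, by \eqref{H}, choose $R_\epsilon>R_0$ such that $|u|\leq\epsilon w$ on $B_{R_\epsilon}^c$. Using the Pucci-extremal comparison $I(u)-I(\epsilon w)\leq \mathcal{M}^+_{\mathcal{L}}(u-\epsilon w)$ available for Isaacs operators, together with $Iu+Vu=0$ and $(I+V)(\epsilon w)\leq 0$, we obtain $\mathcal{M}^+_{\mathcal{L}}(u-\epsilon w)+V(u-\epsilon w)\geq 0$ in $B_{R_\epsilon}$. Since $u-\epsilon w\leq 0$ on $B_{R_\epsilon}^c$, the maximum principle in $B_{R_\epsilon}$, valid because $\lambda_1^+(I+V,B_{R_\epsilon})>0$, propagates $u\leq\epsilon w$ inside; a symmetric argument using $\lambda_1^-$ gives $u\geq -\epsilon w$. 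Sending $\epsilon\to 0$ then yields $u\equiv 0$. I expect the main obstacle to be the rigorous justification of the sharp polynomial decay of $w$, which must match exactly the critical rate appearing in \eqref{H}; a secondary subtlety is transferring the principal-eigenvalue assumption from $I+V$ to the Pucci-linearized operator $\mathcal{M}^+_{\mathcal{L}}+V$ used in the comparison, which likely requires working directly with $I+V$ on $u\pm\epsilon w$ via the positive homogeneity of Isaacs operators rather than through the Pucci surrogate.
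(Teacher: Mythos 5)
Your strategy — compare $u$ against small multiples of a positive function that satisfies $(I+V)w\leq 0$ and decays exactly at the critical polynomial rate, then invoke the maximum/minimum principle in large balls and send the multiplier to zero — is the same comparison scheme the paper uses, and you correctly anticipate that the Pucci detour must be avoided by exploiting the convexity/concavity of $I$ (the paper indeed writes $I(u-v)\geq Iu-Iv$ for $I=\sup_a L_a$ and works directly with $I+V$). Your final passage to the limit $\epsilon\to 0$ is also essentially the paper's step; the paper actually simplifies the second half by observing that once $u\leq 0$, the quantitative weak Harnack applied to $-u\geq 0$ alone forces $u\equiv 0$, without a second barrier comparison.

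The genuine gap is in the barrier construction and, above all, in justifying its decay rate. Your explicit ansatz $w_0(x)=(1+|x|^2)^{-(N+2s)/2}$ cannot give a supersolution: the term $Vw_0$ is of the \emph{same} order as $Iw_0$ and does not become negligible for $|x|$ large, since $V$ is only bounded, not decaying; so ``taking $|x|$ large and gluing'' does not control it. Your Perron alternative runs into two problems: Mou's existence theory is for bounded domains, not the exterior problem $B_{R_0}^c$; and Martin-kernel asymptotics and the boundary Harnack inequality of Ros--Oton and Serra concern the behaviour near the finite boundary of a bounded domain, not asymptotics at infinity — extending them to give a sharp $|x|^{-(N+2s)}$ lower bound at infinity for a fully nonlinear Isaacs operator is itself a nontrivial task that you leave unaddressed (you flag it as ``the main obstacle'' but do not propose a route through it). The paper's mechanism is different and self-contained: it constructs a \emph{global} positive solution $\psi$ of $I\psi+V\psi=0$ in $\R^N$ (Leray--Schauder in an exhausting sequence of bounded domains plus stability of viscosity solutions), and then obtains the crucial lower bound $\inf_{B_R}\psi\geq C^{-1}R^{-(N+2s)}\|\psi\|_{\Lomega}$ not from any asymptotics of $\psi$ but from a rescaled weak Harnack inequality (Theorem~\ref{theo:harnack}) whose constant is tracked explicitly as $C(1+R^{N+2s})$. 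That $R$-dependence of the Harnack constant — obtained by covering $B_R$ with balls of fixed small radius and using Lemma~\ref{lemma:bounds} to compare $\Lomega$-norms after translation/rescaling — is precisely the quantitative ingredient that turns ``there is a positive supersolution'' into ``there is a positive supersolution decaying no faster than $|x|^{-(N+2s)}$,'' and it is absent from your proposal.
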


\begin{remark}
We have stated Theorem \ref{theo:landis} for the whole space $\Omega=\R^N$ for ease of notation, but under minor modifications its proof may also address the case when $\Omega$ is an exterior domain, as in \cite{sirakov_vazquez_2021}, by asking additionally that $u$ has a sign in $\R^N \setminus \Omega$ in order to conclude that $u\equiv 0$ in $\Omega$.
\end{remark}

Theorem \ref{theo:landis} unveils the difference of decay rate of the Landis conjecture, evolving from exponential to polynomial in the presence of a nonlocal operator. This result is new even in the context of the fractional Laplacian, and it is consistent with the polynomial decay rate of the nonlocal nonlinear Schrödinger equation in \cite[Theorem 1.5]{Felmer_Quaas_Tan_2012}, see also \cite[Chapter 7]{BucurValdinoci}. 

Hypothesis \eqref{H} seems to be sharp.
For instance, regarding the fractional Laplacian,
Lemma 2.1 in \cite{BoVa} says that a $C^2$ function $u$ defined in the whole space, which is positive radial and decreasing for $|x|\ge 1$, and  $u(x)\approx |x|^{-(N+2s)}$ for large $|x|$, satisfies $ |(-\Delta)^s u| \approx |u| $ in a neighborhood of infinity.
Besides,  when $V\equiv -\frac12$, Lemma 4.3 in \cite{Felmer_Quaas_Tan_2012} shows the existence of a classical solution $w$ to problem 
$(-\Delta)^sw + \frac12 w = 0 \text{ in the exterior of the ball }  B_1$
such that 
$0 <w(x) \leq \frac{c_0}{|x|^{N+2s}}$
for some $c_0 > 0$.

\smallskip

The remaining of the text splits as follows.
In Section \ref{sect:def} we recall some definitions and auxiliary tools, while in Section \ref{sect:main} we give the proof of Theorem \ref{theo:landis}. 
The latter relies on a nonlocal weak Harnack estimate by Ros-Oton and Serra \cite{ros-oton_boundary_2019}, properly adapted for equations with zero order terms, which in turn gives a lower decay bound for positive supersolutions. This approach can be seen as a kind of nonlocal counterpart of techniques in \cite{sirakov_vazquez_2021}, slightly improved due to the nonlocal character of the estimates.

\bigskip
{\small
	{\bf Aknowlegments}. We are indebted with Prof.\ Alexander Quaas for interesting discussions on the topic, and for bringing references \cite{BoVa, Felmer_Quaas_Tan_2012} to our attention.
	
S.\ Flores was supported by ANID Magíster Nacional grant 22241020.
	
	G.\ Nornberg was supported by Centro de Modelamiento Matemático (CMM) BASAL fund FB210005 for center of excellence from ANID-Chile; and by ANID Fondecyt grant 1220776.
	
}

\bigskip

\section{Auxiliary results}
\label{sect:def}

Throughout this paper, $s\in (0,1)$, $0<\lambda \leq \Lambda$ will be fixed quantities. First we recall the definition of viscosity solution, as given in \cite{fernandez-real_integro-differential_2024}. 

\begin{definition}[Viscosity solution]
\label{def:viscosity}
Let $\Omega \subset \R^N$ be any open set, $s\in (0,1)$ and $f,c\in C(\Omega)$, and consider the equation
\begin{align}
\label{eq:general}
Iu + V(x)u &= f(x)
\end{align}
\begin{enumerate}[i.]
\item We say that $u \in \text{USC}(\Omega) \cap \Lomega$ is a \textit{viscosity subsolution} to \eqref{eq:general} if for any $x\in \Omega$ and any neighborhood of $x$ in $\Omega$, $N_x$, and for any test function $\psi \in \Lomega$ such that $\psi \in C^2(N_x)$, $\psi(x) = u(x)$ and $\psi \geq u$ in $\R^N$ we have that $I\psi(x) + V(x)u(x) \geq f(x)$. We denote this by $Iu(x) + V(x)u(x) \geq f(x)$.
\item We say that $u \in \lsc(\Omega) \cap \Lomega$ is a \textit{viscosity supersolution} to \eqref{eq:general} if for any $x\in \Omega$ and any neighborhood of $x$ in $\Omega$, $N_x$, and for any test function $\phi \in \Lomega$ such that $\phi \in C^2(N_x)$, $\phi(x) = u(x)$ and $\phi \leq u$ in $\R^N$ we have that $I\phi(x) + V(x)u(x) \leq f(x)$. We denote this by $Iu(x) + V(x)u(x) \leq f(x)$.
\item A function $u \in C(\Omega) \cap \Lomega$ is a \textit{viscosity solution} to \eqref{eq:general} if it is both a viscosity subsolution and viscosity supersolution.
\end{enumerate}
\end{definition}

We define the Pucci extremal operators associated to $\mathcal{L}$ by
\begin{align*}
\M^+u(x) = \sup_{L\in \mathcal{L}} Lu(x), \quad 
\M^-u(x) = \inf_{L\in \mathcal{L}} Lu(x),
\end{align*}
and note that, for any operator $I$ of the form \eqref{eq:isaacs} and any smooth bounded functions $u$, $v$, we have
\begin{equation*}
\label{eq:ellipticity}
\M^-(u-v) \leq Iu - Iv \leq \M^+(u-v).
\end{equation*}
This inequality also holds for viscosity solutions, accordingly to the next lemma.
\begin{lemma}[Proposition 3.2.15 in \cite{fernandez-real_integro-differential_2024}]
\label{lemma:eqdifference}
Let $I$ be as in \eqref{eq:isaacs} and $\Omega\subset\R^N$ be an open set. If $u\in\Lomega$ is lower semicontinuous, $v\in\Lomega$ is upper semicontinuous, $f,g \in C(\Omega)$, and $Iu \leq f$, $Iv \geq g$ holds in $\Omega$ in the viscosity sense, then $\M^-(u-v) \leq f-g,\text{ in } \Omega$ also holds in the viscosity sense.
\end{lemma}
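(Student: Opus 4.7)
The plan is to prove the pointwise counterpart for $C^2$ functions first and then lift it to semicontinuous viscosity solutions via the inf/sup-convolution machinery. For smooth $u,v$, I would verify the pointwise inequality $\M^-(u-v)(x) \leq Iu(x) - Iv(x)$ directly from the Isaacs structure of $I$. Say $Iu = \sup_{a\in\A} L_a u$: given $\eta > 0$, pick $a^* \in \A$ with $L_{a^*} v(x) \geq Iv(x) - \eta$; since $Iu(x) \geq L_{a^*} u(x)$, one has $Iu(x)-Iv(x) \geq L_{a^*}(u-v)(x) - \eta \geq \M^-(u-v)(x) - \eta$, and sending $\eta\to 0$ closes the step. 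The case $Iu = \inf_a L_a u$ is symmetric, selecting $a^*$ near-optimal for $Iu$ instead. Coupled with the hypotheses $Iu \leq f$ and $Iv \geq g$, this yields $\M^-(u-v)\leq f - g$ pointwise wherever both sides make classical sense.

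For the viscosity statement, regularize by inf- and sup-convolution,
\[
u_\varepsilon(x) = \inf_{y\in\R^N}\Big\{u(y)+\tfrac{|x-y|^2}{2\varepsilon}\Big\}, \qquad v^\varepsilon(x) = \sup_{y\in\R^N}\Big\{v(y)-\tfrac{|x-y|^2}{2\varepsilon}\Big\}.
\]
Both belong to $\Lomega$; $u_\varepsilon$ is semi-concave and $v^\varepsilon$ is semi-convex, with $u_\varepsilon\nearrow u$ and $v^\varepsilon\searrow v$ as $\varepsilon\to 0$. The nonlocal viscosity theory developed in \cite{fernandez-real_integro-differential_2024} ensures that $u_\varepsilon$ remains a viscosity supersolution of $Iu\leq f_\varepsilon$ and $v^\varepsilon$ a subsolution of $Iv\geq g_\varepsilon$ in every $\Omega' \Ksubset \Omega$ for $\varepsilon$ small, with $f_\varepsilon\to f$ and $g_\varepsilon\to g$ locally uniformly. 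By Alexandrov's theorem both regularizations are twice differentiable a.e.\ in $\Omega'$, and at such points the viscosity inequalities reduce to classical pointwise ones; applying the first step then gives $\M^-(u_\varepsilon - v^\varepsilon)\leq f_\varepsilon - g_\varepsilon$ almost everywhere in $\Omega'$.

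To conclude, the semi-concavity of $w_\varepsilon := u_\varepsilon - v^\varepsilon$ allows one to promote this a.e.\ inequality to the viscosity sense: if a $C^2$ test function $\phi\in\Lomega$ touches $w_\varepsilon$ from below at $x_0\in\Omega'$, semiconcavity forces $w_\varepsilon$ to be punctually $C^{1,1}$ at $x_0$, so each $Lw_\varepsilon(x_0)$ is defined classically and $L\phi(x_0)\leq L w_\varepsilon(x_0)$ for every $L\in\mathcal{L}$ (by positivity of the kernels and $\phi\leq w_\varepsilon$ globally), while a standard Jensen-type density argument in the points of twice differentiability yields $\M^- w_\varepsilon(x_0)\leq f_\varepsilon(x_0)-g_\varepsilon(x_0)$. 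Hence $\M^-\phi(x_0) \leq f_\varepsilon(x_0)-g_\varepsilon(x_0)$, and passing to the limit $\varepsilon\to 0$ via half-relaxed limits (using the semicontinuity of $u$ and $v$) transfers the viscosity inequality to $u-v$. The principal technical obstacle is the preservation of the viscosity inequality under nonlocal inf/sup-convolution: since the integro-differential operators probe the values of $u_\varepsilon,v^\varepsilon$ throughout $\R^N$, one must control the integral tails carefully using the $\Lomega$ weight, a computation that is by now classical in the nonlocal viscosity framework.
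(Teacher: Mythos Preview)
The paper does not give its own proof of this lemma; it is simply quoted from the reference \cite{fernandez-real_integro-differential_2024} (Proposition~3.2.15 there), so there is no in-paper argument to compare against.

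Your outline is essentially the standard route taken in that reference: a pointwise inequality $\M^-(u-v)\le Iu-Iv$ for functions where the operators make classical sense, followed by inf/sup-convolution regularization and stability. The algebra in your first step is correct for both the $\sup$ and $\inf$ forms of $I$. One technical point to watch: functions in $\Lomega$ need not be bounded, so the global inf/sup-convolutions you wrote may not be well defined as stated; the usual fix is to work with localized convolutions (infimum/supremum over a ball) or to first reduce to bounded $u,v$ by truncation, and the book handles this carefully. Apart from that caveat, your sketch matches the structure of the proof in the cited source.
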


Note that if a function $u \in \Lomega$ is of class $C^{1,1}$ in a neighborhood of a point $x \in \R^N$, then $Lu(x)$ is well defined. On the other hand, the next lemma says that, if $u$ is nonsmooth but is a viscosity supersolution (subsolution) to $Iu=f$, we only need for $u$ to be touched from below (above) by some test function at $x$.
\begin{lemma}[Pointwise evaluation, Lemma 3.3 in \cite{caffarelli_regularity_2009}]
\label{lemma:one-sided-reg}
Let $\Omega \subset \R^N$ be an open set, $f\in C(\Omega)$, and $I$ an operator of the type \eqref{eq:isaacs}. If $u \in \lsc(\Omega) \cap \Lomega$ is a viscosity solution to 
$ I u \leq f $
and there exists a test function $\psi$ touching $u$ from below at $x$, then for each $L\in \mathcal{L}$, $Lu(x)$ is well defined and $Iu(x) \leq f(x)$.
\end{lemma}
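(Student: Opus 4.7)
The plan is to upgrade the viscosity supersolution inequality to a classical pointwise one by truncating the test function and passing to the limit. For each small $r > 0$ with $\psi \in C^2(\overline{B_r(x)})$, I would define the glued function
\[
\psi_r(y) := \begin{cases} \psi(y) & y \in B_r(x), \\ u(y) & y \in \R^N \setminus B_r(x). \end{cases}
\]
Since $\psi \leq u$ on $\R^N$ and $\psi(x) = u(x)$, the function $\psi_r$ belongs to $\Lomega$, is $C^2$ on the neighborhood $B_r(x)$ of $x$, agrees with $u$ at $x$, and satisfies $\psi_r \leq u$ on $\R^N$. Thus $\psi_r$ is admissible as a test function from below in Definition \ref{def:viscosity}, and the viscosity supersolution hypothesis gives $I \psi_r(x) \leq f(x)$. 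Because $\psi_r$ is $C^{1,1}$ at $x$ and lies in $\Lomega$, this inequality is a classical pointwise evaluation.

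To pass to the limit $r \to 0^+$, for each $L \in \mathcal{L}$ I would split
\[
L \psi_r(x) = \PV\!\int_{B_r} (\psi(x+y) - \psi(x)) K(y) \, \dy \, + \int_{\R^N \setminus B_r} (u(x+y) - u(x)) K(y) \, \dy,
\]
noting that the first term vanishes as $r \to 0$ by the $C^2$-bound on $\psi$ (whose symmetrized double-difference is $O(|y|^2)$) against $K(y) \leq \Lambda |y|^{-N-2s}$. The key point is that, as $r$ decreases, $\psi_r$ is pointwise non-decreasing toward $u$, so monotone convergence (applied to positive and negative parts of $(\psi_r(x+y) - \psi_r(x)) K(y)$) yields $L \psi_r(x) \to L u(x)$ in $(-\infty, +\infty]$. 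The touching-from-below condition $u \geq \psi$ combined with the $C^2$-regularity of $\psi$ controls the negative part of the integrand uniformly in $r$ near $y = 0$, while $u \in \Lomega$ controls the tails.

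To conclude, for $I = \sup_{a \in \A} L_a$, the bound $L_a \psi_r(x) \leq I \psi_r(x) \leq f(x)$ is uniform in $r$ and $a$, so passing to the limit yields $L_a u(x) \leq f(x) < \infty$ for every $a \in \A$, and hence $I u(x) \leq f(x)$. The main technical obstacle is to guarantee the finiteness of $L u(x)$ for an \emph{arbitrary} $L \in \mathcal{L}$ outside the indexing family of $I$ (and to cover the case $I = \inf_{a \in \A} L_a$, where the PDE does not directly bound individual $L_a \psi_r(x)$ from above); this is handled by ellipticity comparison with the Pucci extremal operator $\M^+$ together with the same monotone convergence argument applied to the extremal, following the template of \cite{caffarelli_regularity_2009, fernandez-real_integro-differential_2024}.
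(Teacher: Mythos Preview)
The paper does not provide its own proof of this lemma: it is quoted verbatim as Lemma~3.3 of Caffarelli--Silvestre \cite{caffarelli_regularity_2009} and used as a black box, so there is no in-paper argument to compare against. Your sketch is the standard proof from that reference: glue $\psi$ inside $B_r(x)$ with $u$ outside, observe that the resulting $\psi_r$ is an admissible test function in Definition~\ref{def:viscosity}, evaluate classically, and let $r\to 0$ using that $\psi_r\nearrow u$. This is correct in outline and matches the cited source.

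One refinement worth making explicit, since you flag it as the ``main technical obstacle'': finiteness of $Lu(x)$ for \emph{every} $L\in\mathcal{L}$ follows already from the uniform ellipticity bounds on the kernels together with the viscosity inequality, without needing to distinguish the $\sup$ and $\inf$ forms of $I$. Indeed, the lower kernel bound $K_a(y)\ge \lambda|y|^{-N-2s}$ gives, for every $a\in\A$,
\[
L_a\psi_r(x)\;\ge\;-C_r+\lambda\int_{\R^N\setminus B_r}\bigl(u(x+y)-u(x)\bigr)_+\,|y|^{-N-2s}\,\dy,
\]
with $C_r$ independent of $a$ (it collects the $C^2$ contribution of $\psi$ on $B_r$ and the tail contribution of $u\in\Lomega$). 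Hence $I\psi_r(x)\le f(x)$ already bounds the positive part of the singular integral of $u$ outside every $B_r$, and together with the touching condition (which controls the negative part) this forces $Lu(x)<\infty$ for all $L\in\mathcal{L}$. The same estimate shows that $L_a u(x)-L_a\psi_r(x)=\int_{B_r}(u-\psi)(x+y)K_a(y)\,\dy$ lies between $\lambda$ and $\Lambda$ times a quantity independent of $a$, so the convergence $L_a\psi_r(x)\to L_a u(x)$ is \emph{uniform} in $a\in\A$; this is what lets you pass the limit through $\inf_{a\in\A}$ directly, without a separate Pucci-comparison step.
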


The stability of viscosity sub and supersolutions under suitable limits is given by the following lemma.
\begin{lemma}[Proposition 3.2.12 in \cite{fernandez-real_integro-differential_2024}]
\label{lemma:stability}
Let $\Omega \subset \R^N$ be an open set, $\{f_k\}_{k\in \N}\subset C(\Omega)$, $f\in C(\Omega)$ be such that $f_k \rightarrow f$ locally uniformly in $\Omega$, and $I$ an elliptic operator of the form \eqref{eq:isaacs}. Let $u_k\in C(\Omega)\cap\Lomega$ be a sequence of functions such that 
\begin{enumerate}
\item $I u_k \leq f_k$ in the viscosity sense in $\Omega$,
\item $u_k \rightarrow u$ locally uniformly in $\Omega$,
\item $u_k \rightarrow u$ in $\Lomega$,
\end{enumerate} 
then $I u\leq f$ in the viscosity sense.
\end{lemma}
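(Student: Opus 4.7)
The goal is to verify the supersolution property of $u$ at an arbitrary point $x_0\in \Omega$: given a test function $\phi \in C^2(N_{x_0}) \cap \Lomega$ with $\phi \leq u$ in $\R^N$ and $\phi(x_0)=u(x_0)$, show $I\phi(x_0)\leq f(x_0)$. First I would arrange a \emph{strict} touching from below at $x_0$: let $\eta\in C^\infty_c(N_{x_0})$ be a cutoff with $\eta\equiv 1$ near $x_0$, and replace $\phi$ by $\tilde\phi(y) := \phi(y) - \varepsilon \eta(y)|y-x_0|^2$. Then $\tilde\phi \in C^2(N_{x_0})\cap\Lomega$, still $\tilde\phi(x_0)=u(x_0)$, and $u - \tilde\phi$ has a strict global minimum at $x_0$ when restricted to a closed ball $\bar B_r(x_0)\Ksubset N_{x_0}$. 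It suffices to show $I\tilde\phi(x_0)\leq f(x_0)$ and then send $\varepsilon\to 0$ using continuity of $I$ on $C^2\cap\Lomega$ functions varying continuously in $\varepsilon$.

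Second, I would use the local uniform convergence $u_k\to u$ on $\bar B_r(x_0)$ to produce approximate contact points. Since $(u-\tilde\phi)(x_0)=0$ is a strict minimum on $\bar B_r(x_0)$, a standard elementary argument gives points $x_k\in \bar B_r(x_0)$ minimizing $u_k-\tilde\phi$ on $\bar B_r(x_0)$ with $x_k\to x_0$ and $m_k := u_k(x_k)-\tilde\phi(x_k)\to 0$; in particular $x_k$ lies in $B_r(x_0)$ for $k$ large. Then define
\[
\tilde\phi_k(y) := \begin{cases} \tilde\phi(y) + m_k, & y\in \bar B_r(x_0), \\ u_k(y), & y\in \R^N \setminus \bar B_r(x_0). \end{cases}
\]
By construction $\tilde\phi_k \leq u_k$ in $\R^N$, $\tilde\phi_k(x_k)=u_k(x_k)$, $\tilde\phi_k \in C^2(B_r(x_0))\cap \Lomega$ (the last because $u_k\in \Lomega$ and $\tilde\phi$ is bounded on $\bar B_r(x_0)$), so $\tilde\phi_k$ is an admissible lower test function for $u_k$ at $x_k$ in the sense of Definition \ref{def:viscosity}(ii). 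Hypothesis (1) therefore yields
\[
I\tilde\phi_k(x_k) \leq f_k(x_k).
\]

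Third, I would pass to the limit. Fix $\delta>0$ small so that $B_\delta(x_k)\subset B_r(x_0)$ for $k$ large. For every $L_a\in \mathcal{L}$, split
\[
L_a \tilde\phi_k(x_k) = \int_{B_\delta}\!\big(\tilde\phi(x_k{+}y){-}\tilde\phi(x_k)\big)K_a(y)\,\dy + \int_{\R^N \setminus B_\delta}\!\big(\tilde\phi_k(x_k{+}y){-}\tilde\phi(x_k){-}m_k\big)K_a(y)\,\dy.
\]
The first integral converges to the corresponding integral at $x_0$ uniformly in $a\in\A$ by using the second-order Taylor expansion of $\tilde\phi$ at $x_k$ together with the uniform bound $K_a(y)\leq \Lambda|y|^{-N-2s}$, which makes the integrand dominated by $C\|\tilde\phi\|_{C^2}|y|^{2-N-2s}$. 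The outer integral is controlled by the $\Lomega$ norm: since $\tilde\phi_k - \tilde\phi - m_k$ equals zero on $\bar B_r(x_0)$ and equals $u_k-\tilde\phi-m_k$ outside, its contribution to the tail differs from the corresponding tail for $\tilde\phi$ at $x_0$ by a quantity bounded uniformly in $a$ by a constant multiple of $\|u_k-u\|_{\Lomega}+ |m_k|+|x_k-x_0|$, hence tends to zero by hypotheses (2)--(3). Taking $\sup_a$ (resp.\ $\inf_a$) in \eqref{eq:isaacs} and using the uniformity of these estimates, we obtain $I\tilde\phi_k(x_k) \to I\tilde\phi(x_0)$. Combined with $f_k(x_k)\to f(x_0)$ by local uniform convergence of $f_k$, the inequality above passes to the limit, giving $I\tilde\phi(x_0)\leq f(x_0)$, and then sending $\varepsilon\to 0$ yields $I\phi(x_0)\leq f(x_0)$.

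The main obstacle is the third step: handling the nonlocal tail so that the $\sup_{a\in\A}$ (or $\inf_{a\in\A}$) can be exchanged with the limit in $k$. This is where the joint hypotheses on $u_k$ — locally uniform convergence (which controls the principal value piece near $x_k$) and $\Lomega$ convergence (which controls the integral at infinity) — must work together, leveraging the \emph{uniform} upper bound $\Lambda|y|^{-N-2s}$ on the kernels in $\mathcal{L}$ so the estimates are independent of $a$.
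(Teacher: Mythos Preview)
The paper does not supply its own proof of this lemma; it is quoted verbatim from \cite{fernandez-real_integro-differential_2024} as an auxiliary tool. Your argument follows the standard stability scheme used in that reference (strict perturbation, approximate contact points, gluing the tail to $u_k$, splitting the integral), so the overall strategy is correct. There is, however, a concrete error in your third step.

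You assert that the outer integral of $L_a\tilde\phi_k$ at $x_k$ differs from the outer integral of $L_a\tilde\phi$ at $x_0$ by a quantity that tends to zero. This is false: outside $\bar B_r(x_0)$ you have $\tilde\phi_k = u_k$, so the difference in the tails contains the term
\[
\int_{\{x_0+y\notin \bar B_r(x_0)\}} \big(u(x_0+y)-\tilde\phi(x_0+y)\big)\,K_a(y)\,\dy,
\]
which is nonnegative (since $\tilde\phi\le u$) and in general strictly positive. The correct limit of $I\tilde\phi_k(x_k)$ is $I\psi(x_0)$, where $\psi$ is the glued function equal to $\tilde\phi$ on $\bar B_r(x_0)$ and to $u$ outside. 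The fix is immediate: since $\psi\ge\tilde\phi$ in $\R^N$ with equality at $x_0$, positivity of the kernels gives $L_a\psi(x_0)\ge L_a\tilde\phi(x_0)$ for every $a\in\A$, hence $I\tilde\phi(x_0)\le I\psi(x_0)\le f(x_0)$; alternatively, $\psi$ is itself an admissible test function in Definition~\ref{def:viscosity}(ii), so the inequality $I\psi(x_0)\le f(x_0)$ already suffices. With this correction the argument goes through.
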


We will also need the following result on the solvability of the Dirichlet problem in bounded domains. It is a special case of Corollary 5.7 in \cite{mou_perrons_2017}.
\begin{proposition}
\label{prop:existence_isaacs}
Let $G \subset \R^N$ be a bounded $C^2$ domain, $\I$ as in \eqref{eq:isaacs}, $f:G \rightarrow \R$ continuous, and assume $V \leq 0$. Then the problem
\[\begin{cases}
\I u + Vu = f & \text{in } G \\
u = g & \text{in } \R^N \setminus G 
\end{cases}\]
admits a viscosity solution $u\in C(\overline{G}) \cap \Lomega$ for any $g:\R^N \rightarrow \R$ bounded and continuous.
\end{proposition}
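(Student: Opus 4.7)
The plan is to invoke Perron's method adapted to the nonlocal fully nonlinear setting. The scheme rests on three ingredients: (i) a comparison principle for $Iu + Vu = f$ in $G$ between viscosity sub- and supersolutions agreeing outside $G$, valid thanks to the proper sign assumption $V \leq 0$; (ii) the stability of viscosity inequalities under monotone limits and semicontinuous envelopes, provided by Lemma \ref{lemma:stability}; and (iii) barriers at each boundary point $x_0 \in \partial G$ compatible with the $2s$-order nonlocal boundary behavior.

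First, we would construct global rough bounds $\underline u \leq \overline u$ belonging to $C(\R^N) \cap \Lomega$, agreeing with $g$ outside $G$, and satisfying $I\overline u + V\overline u \leq f$ and $I\underline u + V\underline u \geq f$ in $G$ in the viscosity sense. Since $g$ and $f$ are bounded and $V \leq 0$, smooth extensions of $g$ taking large enough constants $\pm M$ inside $G$ serve as crude global barriers. Next, we would define the Perron envelope
\[ u(x) := \sup\bigl\{ v(x) : v \in \usc(\R^N) \cap \Lomega,\ Iv + Vv \geq f \text{ in } G,\ \underline u \leq v \leq \overline u,\ v = g \text{ in } \R^N \setminus G\bigr\}, \]
and show that its USC envelope $u^{*}$ is a viscosity subsolution while its LSC envelope $u_{*}$ is a viscosity supersolution. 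The subsolution property for $u^{*}$ follows from the standard sup-of-subsolutions argument combined with Lemma \ref{lemma:stability}; the supersolution property for $u_{*}$ is obtained via the classical bump construction, perturbing $u$ upward near a hypothetical failure point to contradict maximality, with pointwise evaluation of the operator justified by Lemma \ref{lemma:one-sided-reg}.

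Once these properties are in hand, comparison (valid because $V \leq 0$) yields $u^{*} \leq u_{*}$ on $\overline G$, forcing equality and hence continuity of $u := u^{*} = u_{*} \in C(\overline G) \cap \Lomega$. The attainment of the exterior value $u = g$ on $\partial G$ is achieved by squeezing $u$ between local sub/supersolution barriers built at each $x_0 \in \partial G$, modeled on functions involving the fractional boundary distance $(\dist(x, \R^N \setminus G))^{s}$, leveraging the $C^2$ regularity of $\partial G$ and the uniform ellipticity bounds on the kernels in $\mathcal L$. The main obstacle we anticipate is precisely this construction of boundary barriers in the nonlocal framework: the natural boundary behavior for $2s$-order integro-differential operators is governed by $d^{s}$ rather than $d$, and verifying the barrier inequalities requires careful pointwise estimates on $L\psi$ uniformly over $L \in \mathcal L$ and over the Isaacs sup/inf structure in \eqref{eq:isaacs}. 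A secondary technical point is to ensure all admissible competitors remain in $\Lomega$; this is controlled a priori by the global upper barrier $\overline u$.
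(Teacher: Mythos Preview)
The paper does not supply its own proof of this proposition: it is stated as a special case of Corollary~5.7 in \cite{mou_perrons_2017}, whose title is precisely \emph{Perron's method for nonlocal fully nonlinear equations}. Your sketch is therefore not a different route but a reconstruction of the cited argument---comparison under the proper sign $V\le 0$, Perron envelope, bump construction, and $d^s$-type boundary barriers on a $C^2$ domain are exactly the ingredients assembled in Mou's paper.

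One small caution on your outline: the ``crude global barriers'' are not quite as immediate as ``smooth extensions of $g$ taking large enough constants $\pm M$ inside $G$''. If $V\equiv 0$ on part of $G$, adding a large constant to a smooth extension of $g$ does not by itself force the supersolution inequality, since $I$ annihilates constants; and a function equal to $g$ outside $G$ and to a large constant inside is not $C^2$, so one must either verify the viscosity inequality at the discontinuity set or, as is usually done, build the global barriers from a smooth extension $\tilde g$ of $g$ plus a multiple of a fixed function $\varphi\ge 0$ in $\Lomega$ with $\varphi=0$ in $\R^N\setminus G$ and $\M^-\varphi\le -1$ in $G$ (whose existence already uses the $d^s$ boundary construction you flag as the main obstacle). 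This is handled in \cite{mou_perrons_2017}, but your phrasing glosses over it.
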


Throughout the text, a key half-Harnack inequality based on the following proposition will come into play. Proposition \ref{prop:weakharnack} below is essentially that of Theorem 2.2 in \cite{ros-oton_boundary_2019}, with some minor modifications to account for the zero order term.

\begin{proposition}[Weak Harnack inequality]
\label{prop:weakharnack}
Let $V \in C(B_1)\cap L^{\infty}(\R^N)$, with $\|V\|_{L^{\infty}(B_1)} \leq 1$, let $u\in LSC(B_1) \cap L^1_{\omega_s}(\R^N)$ be a viscosity solution to 
\[\begin{cases}
\M^-u + Vu \leq C_0 & \text{in }B_1\\
u\geq 0 & \text{in } \R^N
\end{cases}\]
with $C_0 > 0$. Then there exists $C=C(N,s,\Lambda, \lambda)>0$ such that
\[\|u\|_{\Lomega}\leq C(\inf_{B_{1/2}}u + C_{0}) \]
\end{proposition}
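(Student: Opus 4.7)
The plan is to follow closely the proof of Theorem 2.2 in \cite{ros-oton_boundary_2019}, which establishes the weak Harnack estimate for the pure Pucci inequality $\M^- u \leq C_0$, and to carry out small modifications to absorb the extra zero-order term $Vu$.

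First, I would reduce the problem to a Pucci inequality with a modified right-hand side. Since $u\geq 0$ in $\R^N$ and $\|V\|_{L^\infty(B_1)}\leq 1$, I claim $\M^- u \leq C_0 + u$ in $B_1$ in the viscosity sense. Indeed, if $\psi\in C^2$ touches $u$ from below at $x\in B_1$ with $\psi(x)=u(x)$, then by Definition \ref{def:viscosity} we have $\M^- \psi(x) + V(x)u(x) \leq C_0$, and using $u(x)=\psi(x)\geq 0$ together with $|V(x)|\leq 1$ gives
\[ \M^- \psi(x) \leq C_0 - V(x)\psi(x) \leq C_0 + \psi(x). \]

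A direct application of the RoS weak Harnack to the reduced inequality $\M^- u \leq C_0 + u$ would place $\|u\|_{L^\infty(B_1)}$ on the right-hand side, which is not a priori finite since $u$ is merely a nonnegative supersolution lying in $\Lomega$. To circumvent this, I would revisit the main ingredients of the proof of Theorem 2.2 in \cite{ros-oton_boundary_2019}---a nonlocal ABP estimate, a Calder\'on--Zygmund style covering/growth argument, and the construction of an explicit barrier---replacing at each step the constant right-hand side $C_0$ by $C_0 + u$. The additional contribution of $u$ is controlled by working at a sufficiently small scale $r>0$: after the rescaling $u_r(x) = u(rx)$, the zero-order coefficient in the equation for $u_r$ becomes $r^{2s} V(r\,\cdot)$, whose $L^\infty$ norm is at most $r^{2s}$, and for $r$ small relative to the universal constants in the RoS argument this term can be reabsorbed into the left-hand side at the scale of the rescaled ball. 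A finite covering of $B_{1/2}$ by balls of radius $r$ and the tail estimate intrinsic to the $L^1_{\omega_s}$ norm then glue the local estimates into the desired global inequality.

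The main technical obstacle is precisely this absorption step: one must verify that, at the end of the iterative construction, the contribution of the $u$ term on the right can be quantitatively reabsorbed into $\|u\|_{\Lomega}$ on the left, keeping the final constant dependent only on $N$, $s$, $\lambda$, $\Lambda$. This is what the authors refer to as the ``minor modifications'' of the RoS argument, and it crucially leverages both the universality of the constants appearing in the proof of Theorem 2.2 in \cite{ros-oton_boundary_2019} and the universal bound $\|V\|_{L^\infty(B_1)} \leq 1$ present in the hypothesis.
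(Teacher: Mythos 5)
The high-level plan (adapt the proof of Theorem~2.2 in \cite{ros-oton_boundary_2019} to absorb the zero-order term, using $\|V\|_{L^\infty(B_1)}\leq 1$) is the right one, and your initial reduction $\M^- u \leq C_0 + u$ is correct. But the proposal has two problems, and the second one is a genuine gap.

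First, you mischaracterize the proof of Theorem~2.2 in \cite{ros-oton_boundary_2019}. That result is a \emph{half}-Harnack bounding $\|u\|_{\Lomega}$ by $\inf_{B_{1/2}}u$, and its proof is not an ABP/Calder\'on--Zygmund/barrier iteration; it is a short, direct argument: one fixes a cutoff $\eta\in C_c^\infty(B_{3/4})$ with $\eta\equiv 1$ on $B_{1/2}$, lets $t=\sup\{\tau:\, u\geq\tau\eta \text{ in }\R^N\}$, finds by lower semicontinuity a single point $x_0\in B_{3/4}$ where $t\eta$ touches $u$ from below, and evaluates $\M^-[u-t\eta]$ classically at $x_0$ from above and below. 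You cite machinery that the theorem you intend to adapt simply does not use.

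Second, and more importantly, you explicitly defer the heart of the matter: you write that the absorption of the $u$-term ``is the main technical obstacle'' and assert without argument that it can be carried out. This is precisely the step a proof must supply. The actual resolution is much simpler than you anticipate and is built into the touching argument above: the zero-order term enters only at the single touching point $x_0$, where by construction $u(x_0)=t\eta(x_0)\leq t\leq \inf_{B_{1/2}}u$, so that $|V(x_0)u(x_0)|\leq t$. No rescaling, no covering, and no reabsorption into $\|u\|_{\Lomega}$ are needed at this stage; the zero-order contribution is directly dominated by the infimum that appears on the right-hand side of the target estimate. (Your suggested rescaling $u_r(x)=u(rx)$ to shrink $\|V\|_\infty$ is also misplaced here: the hypothesis $\|V\|_{L^\infty(B_1)}\leq 1$ already normalizes the equation. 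The rescaling-plus-covering step is used later, in the large-ball Theorem~\ref{theo:harnack}, exactly to reduce to the case treated by this proposition.) As it stands, the proposal identifies the difficulty but does not overcome it, and points toward a more complicated machinery than the one actually required.
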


\begin{proof}
Let $\eta\in C_c^{\infty}(B_{3/4})$, $0\leq \eta \leq 1$ and $\eta\equiv 1$ in $B_{1/2}$. Now, $\inf_{B_{3/4}} u \geq 0$, so for $x\in B_{3/4}$
\[u(x) \geq \inf_{B_{3/4}} u \geq \eta \inf_{B_{3/4}} u\]
and, as $\eta \equiv 0$ outside $B_{3/4}$, we actually have $u\geq \eta \inf_{B_{3/4}} u$ in $\R^N$. We can then define
\[t:=\sup\{\tau \in \R:~ u(x) \geq \tau\eta(x),~\forall x\in \R^N\}\ge \inf_{B_{3/4}} u . \]
Note that by taking $x\in B_{1/2}$, we have $u(x)\geq t$, so that $\inf_{B_{1/2}}u \geq t$.

By the lower semiconinuity of $u - t\eta$, there exists $x_0\in B_{3/4}$ such that $\inf_{B_{3/4}}(u-t\eta) = (u-t\eta)(x_0)\geq 0$. If we had $u(x_0)-t \eta(x_0) > 0$, we would get $u > t \eta $ in $B_{3/4}$, so there exists $\varepsilon >0$ such that $u>(t+ \varepsilon)\eta$ in $B_{3/4}$. Since $\eta \equiv 0$ outside $B_{3/4}$, $u\geq 0 = (t+\varepsilon)\eta$. This means $t + \varepsilon >t$ satisfies $u\geq (t+\varepsilon)\eta$ in  $\R^N$, contradicting the maximality of $t$. Thus, we must have $u(x_0) - t\eta(x_0) = 0$.

This means that $t\eta$ touches $u$ from below at $x_0$, then by Lemma \ref{lemma:one-sided-reg}, $\M^-u(x_0)$ is well defined and we have
\[\M^-u(x_0) +V(x_0) u(x_0) \leq C_0\]
Since $u-t\eta$ can be touched from below by a constant function, then $\M^-[u-t\eta](x_0)$ is also classically defined.

Note that
\begin{align*}
\M^-[\eta](x_0) &= \underset{L\in \mathcal{L}}{\inf} \int_{\R^N} (\eta(y) - \eta(x_0))K(x_0-y)\dy \\
&= \underset{L\in \mathcal{L}}{\inf} \int_{B_{3/4}} (\eta(y) - \eta(x_0))K(x_0-y)\dy \\
&\geq \underset{L \in \mathcal{L}}{\inf}  \int_{B_{3/4}} (\eta(y) - 1)K(x_0-y)\dy\\
&\geq \Lambda\int_{B_{3/4}} \frac{\eta(y) - 1}{|x_0 - y|^{N+2s}}\dy \geq -c_1
\end{align*}
with $c_1 = c_1(N,s, \lambda) >0$ because $\eta\leq 1$.

Now, since $(u-t\eta)(x_0)=0$, we get
\begin{align*}
\M^-[u-t\eta](x_0) \leq &\; \M^-[u](x_0) - t\M^-[\eta](x_0) +V(x_0)u(x_0) - tV(x_0)\eta (x_0) \\
\leq &\; C_0 - t\M^-[\eta](x_0) - tV(x_0)\eta (x_0)\\
\leq &\; C_0 + tc_1 + t
\end{align*}
because $V(x_0) \geq -1$ and $\eta \leq 1$.

On the other hand, using again $(u-t\eta)(x_0)=0$,  a lower bound is produced
\begin{align*}
\M^-[u-t\eta](x_0) &\geq \lambda \int_{\R^N} \frac{u(z) - t\eta(z)}{|z-x_0|^{N+2s}} \dz \\
&\geq \frac{1}{2^{N+2s}} \left(\int_{\R^N} \frac{u(z)}{1+ |z|^{N+2s}}\dz - t\|1\|_{\Lomega} \right) \\
&= \Tilde{C}\|u\|_{\Lomega} - tc_2
\end{align*}
where $\Tilde{C} :=\frac{1}{2^{N+2s}} $ and $c_2 := \Tilde{C}\|1\|_{\Lomega} $ depend only on $N$, $s$ and $\lambda$. 

Combining the above inequalities, it yields
\begin{align*}
\Tilde{C}\|u\|_{\Lomega} - tc_2 &\leq C_0 + tc_1 + t\\
\Longrightarrow~~ \Tilde{C}\|u\|_{\Lomega} &\leq C_0 + t (1 + c_1 + c_2)\\
\Longrightarrow \|u\|_{\Lomega} &\leq C (\underset{B_{1/2}}{\inf}u + C_0),
\end{align*}
using the fact that $t\leq \underset{B_{1/2}}{\inf}u$ and by setting $C:= \frac{1 + c_1 + c_2}{\Tilde{C}}$.
\end{proof}

We also recall a global regularity result.
\begin{proposition}[Theorem 3.6 in \cite{quaas_principal_2020}]
\label{prop:regularity}
Let $G$ be a bounded $C^2$ domain, $s\in (0,1)$ and $f\in L^{\infty}(G)$. If $u\in C(G) \cap L^1_{\omega_s}(\R^N)$ is a viscosity solution of 
\[\begin{cases}
-\I [u] = f & \text{in }G\\
u=0 & \text{in } \R^N\setminus G,
\end{cases}\]
then $u\in C^{\alpha}(\overline{G})$ for some $\alpha \in (0,1)$.
\end{proposition}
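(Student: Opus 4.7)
The proof proceeds by contradiction: assume $u\not\equiv 0$. By the duality between sup- and inf-form Isaacs operators in the class \eqref{eq:isaacs} --- together with the symmetry of the principal-eigenvalue hypothesis under $u\mapsto -u$ --- one may assume without loss that $u(x_1)>0$ at some point $x_1\in\R^N$, whence continuity of $u$ yields $u\geq c_0>0$ on a small ball $B_\rho(x_1)$. The ellipticity bound $\M^-u+Vu\leq 0$ holds in $\R^N$ in the viscosity sense by Lemma~\ref{lemma:eqdifference}. The overarching goal is to apply the nonlocal weak Harnack estimate (Proposition~\ref{prop:weakharnack}) at balls $B_{R_0}(x_0)$ for $|x_0|\to\infty$ --- with $R_0\leq \|V\|_\infty^{-1/(2s)}$ chosen so that the scaled potential has $L^\infty$-norm at most~$1$ --- and thereby convert the fixed local positivity at $x_1$ into a polynomial lower bound of order $|x_0|^{-(N+2s)}$ contradicting \eqref{H}.

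Since the weak Harnack requires a globally nonnegative function while $u$ may change sign, the core of the argument is to pass to $u^+$ and verify that it is a viscosity supersolution of
\[
\M^- u^+ + V u^+ \leq h \quad\text{in } \R^N
\]
for a correction term $h$ controlled by the nonlocal interaction of $u^+$ and $u^-$. This uses the pointwise identity $L u^+(x) - L u(x) = \int u^-(x+y) K(y)\,\dy$ wherever $u(x)>0$, combined with a test-function extension argument --- extending a test function $\psi$ for $u^+$ by $u$ itself, rather than by a constant, outside a small neighborhood of the touching point --- to produce an admissible test function for $u$; then Lemma~\ref{lemma:eqdifference} and the comparison of Pucci operators $\M^-(\psi_1+\psi_2)\leq \M^-\psi_1+\M^+\psi_2$ yield $h(x)\lesssim \int u^-(x+y)|y|^{-(N+2s)}\,\dy$. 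Applying Proposition~\ref{prop:weakharnack} to the rescaled $v(z)=u^+(x_0+R_0 z)$ and unscaling produces
\begin{equation*}
\int_{\R^N} \frac{u^+(y)\,\dy}{R_0^{N+2s}+|y-x_0|^{N+2s}} \,\leq\, C\left(R_0^{-2s}\inf_{B_{R_0/2}(x_0)} u^+ + \sup_{B_{R_0}(x_0)} h\right).
\end{equation*}
The left-hand side is bounded below by restricting the integration to $B_\rho(x_1)$, giving $\geq c_0|B_\rho|/(C|x_0|^{N+2s})$ for $|x_0|$ large. Hypothesis \eqref{H} forces $\inf_{B_{R_0/2}(x_0)} u^+ \leq \sup|u|=o(|x_0|^{-(N+2s)})$; and a careful split of the integral defining $h$ into near ($|y|<|x|/2$) and far ($|y|\geq |x|/2$) regions --- invoking \eqref{H} on $u^-$ in the former and its $L^1$-integrability (itself a consequence of \eqref{H}) in the latter --- shows $\sup_{B_{R_0}(x_0)}h=o(|x_0|^{-(N+2s)})$. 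Letting $|x_0|\to\infty$ yields the contradiction, so $u^+\equiv 0$, and by symmetry $u\equiv 0$.

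The main technical obstacle lies in making rigorous the viscosity supersolution inequality for $u^+$ in the nonlocal setting: both the admissibility of the extended test function on $\{u\leq 0\}$ and the sharp $o(|x|^{-(N+2s)})$ decay of $h$ require the continuity of $u$ (Proposition~\ref{prop:regularity}) and the full strength of \eqref{H} applied separately to $u^+$ and $u^-$. This is precisely where the nonlocal character of the estimates produces the ``slight improvement'' over the local techniques of \cite{sirakov_vazquez_2021}: the $L^1_{\omega_s}$ weight in the weak Harnack naturally captures the tail behavior of positive supersolutions, permitting the weaker polynomial hypothesis \eqref{H} in place of the exponential decay required in the local setting.
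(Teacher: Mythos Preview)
Your proposal does not address the stated Proposition~\ref{prop:regularity} at all. That proposition is a global $C^\alpha$ regularity estimate for viscosity solutions of the Dirichlet problem $-\I[u]=f$ in a bounded $C^2$ domain $G$ with zero exterior data; the paper does not prove it but merely quotes it from \cite{quaas_principal_2020} as an auxiliary tool. The statement involves no decay hypothesis~\eqref{H}, no potential $V$, and its conclusion is H\"older continuity of $u$ up to $\partial G$, not $u\equiv 0$. An argument ``by contradiction, assume $u\not\equiv 0$'' and ending with ``so $u\equiv 0$'' cannot possibly establish a regularity estimate.

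What you have written is instead a sketch of an argument for Theorem~\ref{theo:landis}, the paper's main Landis-type result. Even viewed as such, your route differs substantially from the paper's. The paper first constructs a strictly positive entire solution $\psi$ (Proposition~\ref{prop:positivesol}), obtains the lower bound $\psi(x)\gtrsim |x|^{-(N+2s)}$ via the scaled weak Harnack (Theorem~\ref{theo:harnack}), and then applies the maximum principle in $B_R$ to $u-C\delta\psi$ to force $u\leq 0$; a final weak-Harnack step on $-u$ yields the contradiction. Your idea of working directly with $u^+$ and absorbing the sign-changing part into a nonlocal source term $h(x)\lesssim\int u^-(x+y)|y|^{-(N+2s)}\dy$ avoids the auxiliary construction of $\psi$ and the eigenvalue hypothesis $\lambda_1^\pm>0$ is never used, but it requires (i) a rigorous proof that $u^+$ is a viscosity supersolution of $\M^- u^+ + Vu^+\leq h$ at \emph{every} point, including on $\partial\{u>0\}$ where your test-function extension argument is not explained, and (ii) the uniform estimate $\sup_{B_{R_0}(x_0)}h=o(|x_0|^{-(N+2s)})$. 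Both are plausible but neither is immediate, and in any case none of this bears on Proposition~\ref{prop:regularity}.
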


Now we highlight the definition of maximum/minimum principle holding in a bounded domain. In the sequel we consider an operator $F=\I+V$, where $\I$ has the form \eqref{eq:isaacs}.

\begin{definition}\label{defLambda+-}
\label{def:mp_operator}
We say the operator $F$ satisfies the maximum principle (resp. minimum principle) in the bounded domain $G$, and we denote it by $\lambda_1^+(F,G)>0$ (resp. $\lambda_1^-(F,G)>0$), if for every function $v\in \lsc(\Omega)\cap \Lomega$ $(v\in \usc(\Omega)\cap \Lomega )$ that satisfies \begin{center}
	$F[v]\geq 0$ ($F[v]\leq 0$) in $\Omega$ in the viscosity sense, with $v \leq 0$ ($v\ge 0$) outside $G$,
\end{center} one has $v\leq 0$ (resp. $v\ge 0$)  in $\R^N$.
\end{definition}

Our notation on the validity of the maximum and minimum principles in terms of the first eigenvalue quantities $\lambda_1^\pm(F,G)$ comes from the equivalences proved in \cite{quaas_principal_2020}. See those equivalences and analogous definitions in \cite{pq1} for the local case, in which 
$$\lambda_1^-(\M^-, G)=\lambda_1^+(\M^+, G)\le \lambda_1^-(\M^+, G)=\lambda_1^+(\M^-, G).$$

\section{Main results}
\label{sect:main}

In this section we turn to prove a weak Harnack inequality in balls of arbitrarily large radius. Special care must be taken on the dependence of the constant on the radius $R$, as this will ultimately give us the lower polynomial bound on the decay of positive supersolutions.

\begin{theorem}
\label{theo:harnack}
Let $R\geq 1$, $s\in (0,1)$ and $V\in C(B_{R+1})\cap L^{\infty}(B_{R+1})$.
There exists a positive constant $C = C(N, s, \lambda, \Lambda, \|V\|_{L^{\infty}(B_{R+1})})$ such that for any viscosity solution $u\in C(\overline{B_{R+1}}) \cap \Lomega$ to
\begin{align*}
\begin{cases}
\M^-u + Vu \leq C_0 & \text{in }B_{R+1} \\
u\geq 0 & \text{in } \R^N
\end{cases}
\end{align*}
we have
\begin{align}
\label{eq:sharpharnack}
\|u\|_{\Lomega} \leq C (1 + R^{N+2s})\left(\underset{B_R}{\inf}~u + C_0 \right)
\end{align}
\end{theorem}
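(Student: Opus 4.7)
The plan is to apply the localised weak Harnack inequality of Proposition \ref{prop:weakharnack} around arbitrary points $x_0 \in B_R$, then convert the resulting weighted $L^1$-bound, centered at $x_0$, into one centered at the origin; this transition produces precisely the factor $1+R^{N+2s}$.

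First, I would observe that Proposition \ref{prop:weakharnack} remains valid when $\|V\|_{L^\infty(B_1)} \leq M$ for any $M > 0$, the constant then depending on $M$: inspecting the proof given above, the only change is that the bound $V(x_0) \geq -1$ is replaced by $V(x_0) \geq -M$, so the factor $1+c_1+c_2$ becomes $M+c_1+c_2$. By the translation invariance of the class $\mathcal{L}$, hence of $\M^-$, this generalised statement can be applied to $u$ on $B_1(x_0)$ for any $x_0 \in B_R$, because $B_1(x_0) \subset B_{R+1}$ and therefore $\|V\|_{L^\infty(B_1(x_0))} \leq \|V\|_{L^\infty(B_{R+1})}$. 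Setting $\omega_s^{x_0}(x) := 1+|x-x_0|^{N+2s}$, one obtains
\[
\int_{\R^N} \frac{|u(x)|}{\omega_s^{x_0}(x)}\,\dx \le C_1 \Bigl( \inf_{B_{1/2}(x_0)} u + C_0 \Bigr),
\]
with $C_1 = C_1(N,s,\lambda,\Lambda,\|V\|_{L^\infty(B_{R+1})})$.

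The next ingredient is an elementary weight comparison: for $|x_0| \leq R$ there exists a dimensional constant $c>0$ with
\[
1 + |x|^{N+2s} \leq c\,(1+R^{N+2s})\,(1+|x-x_0|^{N+2s}), \qquad x\in\R^N,
\]
verified by splitting into $|x| \leq 2R$ (where the left side is bounded by $C(1+R^{N+2s})$) and $|x| \geq 2R$ (where $|x-x_0| \geq |x|/2$, giving a dimensional ratio between the two weights). Dividing by $\omega_s^{x_0}(x)\omega_s(x)$ and integrating against $|u|$ yields $\|u\|_{\Lomega} \leq c(1+R^{N+2s}) \|u\|_{L^1_{\omega_s^{x_0}}(\R^N)}$. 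Combining with the previous display, for every $x_0 \in B_R$,
\[
\|u\|_{\Lomega} \leq C(1+R^{N+2s})\Bigl(\inf_{B_{1/2}(x_0)} u + C_0\Bigr) \leq C(1+R^{N+2s})\bigl(u(x_0) + C_0\bigr),
\]
since $x_0 \in B_{1/2}(x_0)$. Taking the infimum over $x_0 \in B_R$ on the right side (permissible because the left side is independent of $x_0$, and using continuity of $u$ on $\overline{B_R}$) produces \eqref{eq:sharpharnack}.

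The only mildly technical step is the generalisation of Proposition \ref{prop:weakharnack} from $\|V\|_{L^\infty(B_1)}\le 1$ to $\|V\|_{L^\infty(B_1)}\le M$, which is essentially bookkeeping in the proof just presented; beyond that, everything is a straightforward centering change of the weight $\omega_s$, from which the factor $1+R^{N+2s}$ emerges naturally.
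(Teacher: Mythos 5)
Your proposal takes a genuinely different route from the paper's: where the paper rescales to unit balls of small radius $r_0 = (1+\|V\|_{\infty})^{-1/2s}$ (so that the rescaled potential $r_0^{2s}V$ has $L^\infty$-norm $\le 1$ and Proposition \ref{prop:weakharnack} applies as stated, with Lemma \ref{lemma:bounds} absorbing both the scaling and the recentering), you instead directly extend Proposition \ref{prop:weakharnack} to $\|V\|_\infty \le M$ with a constant depending on $M$, and then work at scale $1$ with a pure translation. Both routes are valid; your bookkeeping claim is correct, since the only place $\|V\|_\infty\le 1$ enters that proof is the bound $-tV(x_0)\eta(x_0)\le t$, which simply becomes $Mt$. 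Your version avoids the scaling lemma and is arguably cleaner for proving dependence of the final constant on $\|V\|_{L^\infty(B_{R+1})}$; the paper's version keeps Proposition \ref{prop:weakharnack} untouched. Your final step (applying the estimate at every $x_0\in B_R$, using $\inf_{B_{1/2}(x_0)} u\le u(x_0)$, and taking the infimum over $x_0$) is also a tidier alternative to the paper's covering argument.

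There is, however, a direction error in your weight comparison. You display $1+|x|^{N+2s}\le c(1+R^{N+2s})(1+|x-x_0|^{N+2s})$, i.e.\ $\omega_s(x)\le c(1+R^{N+2s})\,\omega_s^{x_0}(x)$; dividing by $\omega_s(x)\omega_s^{x_0}(x)$ and integrating against $|u|$ then yields
\[
\|u\|_{L^1_{\omega_s^{x_0}}(\R^N)}\le c(1+R^{N+2s})\,\|u\|_{\Lomega},
\]
which is the \emph{opposite} of the bound you state and need. What you actually require is the reverse pointwise inequality $1+|x-x_0|^{N+2s}\le c(1+R^{N+2s})(1+|x|^{N+2s})$, which also holds by an analogous splitting (for $|x|\ge 2R$ use $|x-x_0|\le \tfrac{3}{2}|x|$; for $|x|<2R$ use $|x-x_0|<3R$). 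Once that correction is made, everything in the proposal goes through and yields \eqref{eq:sharpharnack} with the claimed dependence of $C$ on $N,s,\lambda,\Lambda,\|V\|_{L^\infty(B_{R+1})}$.
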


\begin{proof}The first step is to derive a weak Harnack inequality in small balls.

Let $x_0 \in \R^N$, 
\begin{equation}
\label{eq:r0}
0 < r_0 \leq (1 + \|V\|_{L^{\infty}(B_{2r_0}(x_0))})^{-1/2s},
\end{equation}
and let $u \in \lsc(B_{r_0}(x_0)) \cap \Lomega$ a viscosity solution to 
\[\begin{cases}
\M^-u + Vu \leq C_0 & \text{in }B_{r_0}(x_0) \\
u\geq 0 & \text{in } \R^N. 
\end{cases}\]
We claim that
\begin{equation}
\label{eq:wh_smallballs}
\|u\|_{\Lomega} \leq C (1+|x_0|^{N+2s}) \left(\inf_{B_{r_0/2} (x_0)}u + r_0^{2s}C_0 \right)
\end{equation}
for some $C=C(N,s,\lambda,\Lambda,r_0)>0$.

Indeed, if we define 
\begin{equation}
\label{eq:v}
v(y) = u(x_0 + r_0y),~\tilde{V}(y) = r_0^{2s}V(x_0+r_0 y),
\end{equation} 
we get $\M^-v(y) = r_0^{2s}\M^-u(x_0 +r_0y)$, and so $v$ satisfies  
\[\begin{cases}
\M^-v + \tilde{V}v \leq r_0^{2s}C_0 & \text{in }B_1 \\
v\geq 0 & \text{in } \R^N. 
\end{cases}\]
in the viscosity sense. Furthermore, 
\begin{align*}
\|\tilde{V}\|_{L^{\infty}(B_1)} =r_0^{2s} \|V\|_{L^{\infty}(B_{r_0}(x_0))} &\leq \frac{\|V\|_{L^{\infty}(B_{r_0}(x_0))}}{1 + \|V\|_{L^{\infty}(B_{2r_0}(x_0))}} \leq 1
\end{align*}
thus by Proposition \ref{prop:weakharnack}, we obtain 
\[\|v\|_{\Lomega}\leq C(\inf_{B_{1/2}}v + r_0^{2s}C_0). \]

Now we need to compare the $\Lomega$-norms of $v$ and $u$. We pause the proof of Theorem~\ref{theo:harnack} to provide such an estimate.

\begin{lemma}
\label{lemma:bounds}
Let $u \in \Lomega$, $x_0 \in \R^N$ and $r_0 \in (0, 1]$. If one defines $v(y) = u(x_0 + r_0 y)$, then there exists $C$ that depends on $N$, $s$ and $r_0$, such that
\[\frac{C^{-1}}{1+|x_0|^{N+2s}}\|u\|_{\Lomega} \leq \|v\|_{\Lomega} \leq C(1+|x_0|^{N+2s}) \|u\|_{\Lomega}.\]
\end{lemma}

\begin{proof}[Proof of Lemma \ref{lemma:bounds}]
Note that
\[\|v\|_{\Lomega} = r_0^{2s} \int_{\R^N} \frac{1 + |z|^{N+2s}}{r_0^{N+2s} + |z-x_0|^{N+2s}} \frac{|u(z)|}{1+|z|^{N+2s}}\dz.\]
Thus, it suffices to prove
\[\frac{3^{-N-2s}}{1+|x_0|^{N+2s}} \leq \frac{1 + |z|^{N+2s}}{r_0^{N+2s} + |z-x_0|^{N+2s}} \leq \left( \frac{2}{r_0}\right)^{N+2s} (1+|x_0|^{N+2s}).\]

If $|z| \leq 2|x_0|$,
\[\frac{1 + |z|^{N+2s}}{r_0^{N+2s} + |z - x_0|^{N+2s}} \leq \frac{1 + 2^{N+2s}|x_0|^{N+2s}}{r_0^{N+2s}} \leq \left( \frac{2}{r_0}\right)^{N+2s}(1+|x_0|^{N+2s}),\]
and, as $r_0\leq 1$ and $|z-x_0| \leq |z| + |x_0| \leq 3|x_0|$,
\[\frac{1 + |z|^{N+2s}}{r_0^{N+2s} + |z - x_0|^{N+2s}} \geq \frac{1 + |z|^{N+2s}}{1 + 3^{N+2s} |x_0|^{N+2s}} \geq \frac{1}{3^{N+2s}}\frac{1}{1+|x_0|^{N+2s}}.\]

If $|z| > 2|x_0|$, we have, by the triangle inequality,
 \[|z-x_0| \geq |z| - |x_0| \geq |z| - \frac{|z|}{2} = \frac{|z|}{2} \geq r_0 \frac{|z|}{2},\]
which gives
\[\frac{1 + |z|^{N+2s}}{r_0^{N+2s} + |z - x_0|^{N+2s}} \leq  \frac{1 + |z|^{N+2s}}{r_0^{N + 2s} (1 + (|z|/2)^{N+2s})} \leq \left( \frac{2}{r_0}\right)^{N+2s} \leq \left( \frac{2}{r_0}\right)^{N+2s} ( 1+ |x_0|^{N+2s}).\]

On the other hand, $|z-x_0| \leq |z| + |x_0| \leq 2|z|$. This implies
\[\frac{1 + |z|^{N+2s}}{r_0^{N+2s} + |z - x_0|^{N+2s}} \geq \frac{1+|z|^{N+2s}}{1+2^{N+2s}|z|^{N+2s}} \geq \frac{1}{2^{N+2s}} \geq \frac{1}{2^{N+2s}} \frac{1}{1+ |x_0|^{N+2s}}.\]
\end{proof}
\textit{Proof of Theorem \ref{theo:harnack}, continued:} In view of the above lemma, we get 
\[\|u\|_{\Lomega} \leq C (1 + |x_0|^{N+2s})\left(\inf_{B_{r_0/2} (x_0)}u + r_0^{2s}C_0 \right)\]
which is exactly \eqref{eq:wh_smallballs}.

\smallskip

Next, we set $r_0 = (1 + \|V\|_{\infty})^{-2s} \leq 1$. We then take a cover of $B_R$ by balls $\{B_{r_0/2}(x_i)\}_{i=1}^k$, $x_i \in B_R$, which satisfies $\bigcup_{i=1}^k B_{r_0}(x_i) \subset B_{R+1}$. By continuity of $u$ in $B_{R+1}$, $u$ achieves a minimum and a maximum in $\overline{B_R}$, so we can take $x_i$ such that
\begin{equation}
\label{eq:coupdegrace}
\inf_{B_{r_0/2}(x_i)}u \leq  \inf_{B_R}u .
\end{equation}

Applying \eqref{eq:wh_smallballs} at $x_i$ we obtain
\begin{align*}
\|u\|_{\Lomega} & \leq C(1+|x_i|^{N+2s})\left(\inf_{B_{r_0/2} (x_i)}u  + r_0^{2s}C_0\right) \\
&  \leq C (1 + R^{N+2s})\left(\inf_{B_{r_0/2} (x_i)}u + C_0\right).
\end{align*}

Combining this with \eqref{eq:coupdegrace} gives \eqref{eq:sharpharnack}, for  $R\geq 1$.
\end{proof}

The proof of Theorem \ref{theo:harnack} gives a short proof of the strong minimum principle, assuming the minimum principle holds. 
\begin{lemma}
\label{lemma:smp}
Let $G \subset \R^N$ be a bounded domain, and $F=\I+V$, where $\I$ has the form \eqref{eq:isaacs} and $V\in L^{\infty}(G)$, such that $\lambda_1^- (F, G)>0$, $s\in (0, 1)$. If $u\in \lsc(\overline{G}) \cap L_{\omega_s}^1(\R^N)$ is a viscosity solution to
\[\begin{cases}
F[u] \leq 0 & \text{ in }G\\
u\geq 0 & \text{ in }\R^N \setminus G, 
\end{cases}\]
then either $u>0$  or $u\equiv 0$ in $G$.
\end{lemma}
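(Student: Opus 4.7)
The plan is to derive the dichotomy by running the small-ball weak Harnack estimate obtained within the proof of Theorem \ref{theo:harnack} at an alleged interior zero of $u$, and reading off that $u$ has zero $\Lomega$-norm.

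To begin, I would invoke the hypothesis $\lambda_1^-(F,G)>0$ applied directly to $u$, together with $u \geq 0$ outside $G$, to upgrade the nonnegativity of $u$ to all of $\R^N$. Then, arguing by contradiction, assume that $u \not\equiv 0$ in $G$ and that $u(x_0) = 0$ at some $x_0 \in G$. The next step is to transfer the viscosity inequality $F[u] \leq 0$ into the Pucci form $\M^- u + V u \leq 0$ in $G$: since $I$ is either a supremum or an infimum of linear operators in $\mathcal{L}$, the pointwise bound $\M^- \phi \leq I \phi$ valid for any smooth test function $\phi$, combined with the viscosity supersolution property of $u$, yields this transfer immediately (one simply evaluates the supersolution condition with the same test function that touches $u$ from below).

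Once this is set up, fix $r_0 \in (0,1]$ with $B_{2r_0}(x_0) \subset G$ and $r_0 \leq (1+\|V\|_{L^\infty(G)})^{-1/(2s)}$, so that the scaling condition \eqref{eq:r0} is satisfied, and apply the rescaled weak Harnack estimate \eqref{eq:wh_smallballs} (with $C_0=0$) to $u$, obtaining
\[\|u\|_{\Lomega} \leq C(1+|x_0|^{N+2s}) \inf_{B_{r_0/2}(x_0)} u.\]
Since $u \geq 0$ in $\R^N$ and $u(x_0) = 0$, the infimum on the right is zero; hence $\|u\|_{\Lomega} = 0$ and $u \equiv 0$ almost everywhere in $\R^N$. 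The lower semicontinuity and nonnegativity of $u$ in $\overline{G}$ then upgrade this to $u \equiv 0$ pointwise in $G$, since any point where $u$ were strictly positive would contain an open neighborhood on which $u$ is bounded below by a positive constant, contradicting the almost-everywhere vanishing.

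The main technical point is the transfer of the equation from $F[u] \leq 0$ to $\M^- u + Vu \leq 0$ in the viscosity sense; after that, everything follows from the ingredients already built in the proof of Theorem \ref{theo:harnack}. Indeed, once \eqref{eq:wh_smallballs} is in hand, the strong minimum principle becomes essentially a one-line consequence of substituting $u(x_0)=0$ into its right-hand side, matching the authors' remark that the proof of Theorem \ref{theo:harnack} yields a short proof of this lemma.
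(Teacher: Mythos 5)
Your proposal is correct and follows exactly the same route as the paper: apply $\lambda_1^-(F,G)>0$ to get $u\ge 0$ in $\R^N$, then at an interior zero $x_0$ invoke the rescaled weak Harnack estimate \eqref{eq:wh_smallballs} with $C_0=0$ to force $\|u\|_{\Lomega}=0$. The two points you spell out that the paper leaves implicit --- the passage from $F[u]\le 0$ to $\M^-u+Vu\le 0$ via $\M^-\le I$ applied to test functions, and the upgrade from a.e.\ vanishing to pointwise vanishing using lower semicontinuity and nonnegativity --- are both sound and make the argument more self-contained; otherwise there is no difference in substance.
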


\begin{proof}
We first apply the minimum principle to get $u\ge 0$ in $\R^N$. Next, if $u(x_0)= 0$ for some $x_0 \in G$, we take $r_0> 0$ such that $B_{r_0}(x_0) \subset G$ and such that \eqref{eq:r0} holds. We define $v$ as in \eqref{eq:v}, then inequality \eqref{eq:wh_smallballs} holds with $C_0 = 0$. This implies $\|u\|_{\Lomega} = 0$, so $u\equiv 0$.
\end{proof}

We need a viscosity supersolution which is positive in the whole space in order to apply a comparison principle and then conclude the unique continuation result at infinity. The next proposition is a nonlocal version to Proposition 4.2 in \cite{sirakov_vazquez_2021}, and will give us a positive bound from above to our solution.
\begin{proposition}
\label{prop:positivesol}
Let $V \in C(\R^N) \cap L^{\infty}(\R^N)$ be such that $\lambda_1^\pm ( I + V, G) >0$ for each bounded subdomain $G$ of $\R^N$. Then there exists $\psi \in C(\R^N) \cap \Lomega$ a viscosity solution to
\[I \psi + V\psi = 0 \text{ in } \R^N \]
with $\psi>0$ in $\R^N$.
\end{proposition}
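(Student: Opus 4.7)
The plan is to construct $\psi$ as the locally uniform limit of positive Dirichlet solutions on expanding balls. For each $R\geq 2$, I would first solve
\[
\begin{cases}
I u_R + V u_R = 0 & \text{in } B_R, \\
u_R = 1 & \text{in } \R^N \setminus B_R.
\end{cases}
\]
Since Proposition \ref{prop:existence_isaacs} only covers non-positive potentials, I would produce $u_R$ through a Leray-Schauder homotopy: set $M = \|V\|_\infty + 1$ and, for $t \in [0,1]$, consider $I u_t + (tV - (1-t)M) u_t = 0$ in $B_R$ with $u_t = 1$ outside. At $t=0$ the potential $-M$ is negative, so Proposition \ref{prop:existence_isaacs} applies (after the standard change $u = w+1$, using $I(w+1) = Iw$, to reduce to zero exterior data). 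Along the homotopy one has $tV-(1-t)M \leq V$, so by monotonicity of $\lambda_1^\pm$ in the potential the principal eigenvalues remain strictly positive, and the maximum and minimum principles yield a uniform a priori $L^\infty$ bound. Compactness of the associated solution operator follows from Proposition \ref{prop:regularity}. Positivity $u_R>0$ in $\R^N$ is then given by the strong minimum principle, Lemma \ref{lemma:smp}.

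Next I normalize $\psi_R := u_R / u_R(0)$, so that $\psi_R(0) = 1$. By the ellipticity inequalities preceding Lemma \ref{lemma:eqdifference}, $\psi_R$ is a viscosity solution of both $\M^- \psi_R + V \psi_R \leq 0$ and $\M^+ \psi_R + V \psi_R \geq 0$ in $B_R$. Theorem \ref{theo:harnack} applied with radius $1$ (available since $R\geq 2$) yields
\[
\|\psi_R\|_{\Lomega} \leq C\, \inf_{B_1} \psi_R \leq C\, \psi_R(0) = C,
\]
uniformly in $R$. A standard interior $L^\infty$ bound for subsolutions of $\M^+$, applied after rescaling to small balls to absorb the $V$-term, then gives uniform bounds on $\psi_R$ on every compact $K\Ksubset \R^N$, and Proposition \ref{prop:regularity} delivers uniform $C^{\alpha}$ estimates in a neighborhood of $K$.

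Arzel\`a-Ascoli combined with a diagonal extraction produces a subsequence $\psi_{R_n} \to \psi$ locally uniformly in $\R^N$. The uniform $\Lomega$-bound, together with local uniform convergence and a truncation plus dominated convergence argument, upgrades this to $\psi_{R_n} \to \psi$ in $\Lomega$. Lemma \ref{lemma:stability} then identifies $\psi$ as a viscosity solution of $I\psi + V\psi = 0$ in every bounded subdomain, and hence in $\R^N$. Since $\psi(0) = 1$ and $\psi \geq 0$ (as a locally uniform limit of positive functions), Lemma \ref{lemma:smp} applied on arbitrary bounded subdomains forces $\psi > 0$ throughout $\R^N$.

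The principal obstacle is Step 1: carrying out the Leray-Schauder argument carefully, especially verifying that the positivity of $\lambda_1^\pm$ persists along the entire homotopy and that the a priori $L^\infty$-bound on $u_t$ is independent of $t$. A second delicate point is Step 3, where the upgrade of local uniform convergence to $\Lomega$-convergence works precisely because the weak Harnack estimate in Theorem \ref{theo:harnack} has already produced the required uniform tail control.
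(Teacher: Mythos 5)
Your overall route is the same as the paper's: solve the Dirichlet problem in bounded domains via a Leray--Schauder argument, then exhaust $\R^N$, extract a limit by compactness, and identify it via stability (Lemma \ref{lemma:stability}) and the strong minimum principle (Lemma \ref{lemma:smp}). Two places deserve closer scrutiny.

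The main gap is the a priori bound in Step~1. You assert that ``the maximum and minimum principles yield a uniform a priori $L^\infty$ bound'' along the homotopy $t\mapsto tV-(1-t)M$. The validity of $\lambda_1^\pm(I+V_t,B_R)>0$ gives uniqueness of the Dirichlet solution, but it does \emph{not} by itself produce a quantitative $L^\infty$ estimate: writing $w_t=u_t-1$, the equation becomes $Iw_t+V_tw_t=-V_t$ with $w_t=0$ outside $B_R$, and the sign of the inhomogeneity $-V_t$ is indeterminate, so one cannot simply compare $u_t$ with the constant $1$. The paper closes this hole with a blow-up argument: if $\|u_n\|_{C(\overline G)}\to\infty$, normalize $\tilde u_n=u_n/\|u_n\|$, use the $C^\alpha$ estimate of Proposition \ref{prop:regularity} to pass to a limit $\tilde u$ solving $F[\tilde u]-\sigma(1-t)\tilde u=0$ with $\|\tilde u\|=1$, and contradict the maximum principle (which still holds since $\sigma(1-t)\ge 0$). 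Some argument of this type is needed; without it Step~1 is incomplete. (The paper actually sets up the homotopy slightly differently, as a fixed point of $S[v]=$ solution of $F[u]-\sigma u=g-\sigma v$ with $V-\sigma\le 0$, but your potential-interpolation is an equally reasonable framing once the a priori bound is in place.)

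Your normalization $\psi_R:=u_R/u_R(0)$ followed by Theorem \ref{theo:harnack} to obtain the uniform bound $\|\psi_R\|_{\Lomega}\le C$ is a genuinely different, and in fact rather attractive, device that the paper does not use (the paper instead normalizes at $x_0$ and invokes the maximum principle to conclude $\tilde u_j\le m$). However, this choice transfers the work elsewhere: you then need (i) an interior local maximum principle for $\M^+$-subsolutions to convert the $\Lomega$ bound into uniform $L^\infty$ bounds on compacta, a standard nonlocal ABP-type fact but one not among the cited lemmas and which you should reference explicitly; and (ii) genuine tail control for the $\Lomega$-convergence, which ``truncation plus dominated convergence'' does not deliver as stated, since there is no uniform pointwise dominating function: the exterior value $m_R=1/u_R(0)$ may grow with $R$, and the uniform $\Lomega$ bound alone gives only $m_R\lesssim R^{2s}$, which is not enough to make $\int_{|z|>M}\psi_{R}(z)(1+|z|^{N+2s})^{-1}\dz$ uniformly small. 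The paper sidesteps this by working with a uniform pointwise bound on $\tilde u_j$; your variant needs a comparable uniform $L^\infty(\R^N)$ estimate (or a uniform integrability argument) to close the $\Lomega$-convergence step.
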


\begin{proof}
Let $F:=\I +V$,  $G\subset \R^N$ be a smooth bounded domain, and $g \in C(\overline{G})$. Since we are assuming $V\in C(\R^N) \cap L^{\infty}(\R^N)$, there exists $\sigma >0$ such that $V-\sigma \leq 0$ in $G$. By Proposition \ref{prop:existence_isaacs}, for every $v\in C(\overline{G})$, there exists a unique viscosity solution $u \in C(\overline{G})$ such that $F[u] = g- \sigma v$ in $G$ and $u = 0$ in $\R^N \setminus G$. So we can define an operator $S:C(\overline{G}) \rightarrow C(\overline{G})$ such that $S[v] = u$ is the solution to this problem.

Now, taking $C_{0} = \|g\|_{C(\overline{G})} + \sigma\|v\|_{C(\overline{G})} + \|V- \sigma\|_{C(\overline{G})} \|u\|_{C(\overline{G})} $, we get
\[\begin{cases}
\M^+u \geq -C_{0} & \text{ in }G \\
\M^-u \leq C_{0} & \text{ in }G \\
u= 0 & \text{ in }\R^N\setminus G
\end{cases}\]
so, by Proposition \ref{prop:regularity}, there exists $\alpha \in (0,1)$, depending only on $G$, $N$, $s$, $\lambda$ and $\Lambda$, such that $u \in C^{ \alpha}(\overline{G})$. This implies that, if $A\subset C(\overline{G})$ is a bounded set, then $S(A)$ is bounded in $C^{\alpha}(\overline{\Omega})$ and, by the compact inclusion of $C^{\alpha}(\overline{\Omega})$ in $C(\overline{\Omega})$, it is a relatively compact set in $C(\overline{G})$. This means that $S: C(\overline{G}) \rightarrow C(\overline{G})$ is a compact operator. 

We will apply the Leray-Schauder alternative below to conclude that $S$ has a fixed point.
\begin{proposition}[Corollary 1.19 in \cite{bandle_solutions_2004}]
Let $X$ be a Banach space, and $S:X\rightarrow X$ a compact operator. Then the following alternative holds:
\begin{enumerate}
\item either $x-tS(x) = 0$ has a solution for very $t\in [0,1]$;
\item or the set $\{x\in X: \exists t \in [0,1]:~x-tS(x) = 0\}$ is unbounded.
\end{enumerate}
\end{proposition}
Aiming for a contradiction, let us suppose that there exist $(u_n)_{n\in \N} \subset C(\overline{G})$ and $(t_n)_{n\in \N} \subset [0,1]$ such that $u_n - t_n S[u_n] = 0$ for every $n\in \N$, and $\|u_n\|_{C(\overline{G})} \rightarrow \infty$. Note that by compacity we can assume that $t_n \rightarrow t\in [0,1]$. Now, by definition of $S$, it holds that
\[F[u_n] - \sigma u_n = t_n (g - \sigma u_n) .\] 
By defining $\Tilde{u}_n = \frac{u_n}{\|u_n\|_{C(\overline{G})}}$, we get $\|\Tilde{u}_n\|_{C(\overline{G})} = 1$ and also
\[F[\Tilde{u}] - \sigma \Tilde{u} = t_n \left( \frac{g}{\|u_n \|_{C(\overline{G})}} - \Tilde{u}_n\right)\]
which means, again by Holder regularity, that
\[\|\tilde u_n \|_{C^{\alpha}(\overline{G})} \leq C(1 + \|u_n \|^{-1}_{C(\overline{G})} \|g\|_{C(\overline{G})}) \leq C.\]
Therefore, $(u_n)_{n\in \N}$ is bounded in $C^{\alpha}(\overline{G})$, which implies that there exists a convergent subsequence (which will not be relabeled) $\Tilde{u}_n \rightarrow \Tilde{u} \in C(\overline{G})$ with $\Tilde{u}$ a viscosity solution to
\[F[\Tilde{u}] - \sigma (1-t) \Tilde{u} = 0.\]
Hence $\Tilde{u} \equiv 0$, because we know that if $F$ satisfies the maximum principle, then $F-\sigma (1-t) $ also does for $\sigma (1-t) \geq 0$. But this contradicts the fact that $\|\Tilde{u}\|_{C(\overline{G})} =1$.

We conclude that for every $t\in [0,1]$, there exists a viscosity solution $u$ for $u-tS[u] = 0$ in $G$, $u=0$ in $\R^N\setminus G$. Take $g = -V \in C(\overline{G})$ and $t=1$ and we have found a viscosity solution to
\[\begin{cases}
F[v] = -V &\text{ in } G\\
v= 0 & \text{ in } \R^N\setminus G.
\end{cases}\]
By setting $u= v + 1$ we derive
\[F[u] = \I[v + 1] + Vv + V = F[v] +V = 0 \text{ in }G\]
and $u = 1$ outside of $G$. By the strong minimum principle (Lemma \ref{lemma:smp}), $u>0$ in $G$.

Take $\{G_j\}_{j\in \N}$ an increasing sequence of smooth bounded domains such that $G_j \nearrow \R^N$, and define, for every $j\in \N$, $u_j$ as the unique viscosity solution (given by Proposition \ref{prop:existence_isaacs}) to 
\[\begin{cases}
\I u_j + Vu_j = 0 & \text{in } G_j \\
u_j = 1 & \text{in } \R^N \setminus G_j.
\end{cases}\]
By Lemma \ref{lemma:smp}, $u>0$ in $G_j$ and by the maximum principle we get $u_j\leq 1$. Moreover, fix a $x_0 \in G_1$ and redefine $u_j$ by posing $\Tilde{u}_j = \dfrac{u_j}{u_j(x_0)} =: mu_j$.

Let $K\subset \R^N$ a compact set with $K\subset \subset G_{j}$ for $j \geq j_0$. As $\Tilde{u}_j = m$ outside of $G_j$, by the maximum principle we have $\Tilde{u}_j \leq m$ in $K$, so $\|\Tilde{u}_j\|_{C(K)}\leq m$. By Proposition \ref{prop:regularity} we get that $\|\Tilde{u}_j\|_{C^{\alpha}(K)} \leq \Tilde{C}$, so $(\Tilde{u}_j)$ converges strongly in $C(K)$ to a function $\psi\in C(K)$. This means that $(\Tilde{u}_j)_{j\in\N}$ converges to $\psi$ in $C_{\text{loc}}(\R^N)$.

Now, we have to check $\Tilde{u}_j \rightarrow u$ in $L^1_{\omega_s}(\R^N)$. Note that we must have $ 0 \leq \psi \leq m$ by the maximum and minimum principles. As constant functions are in $\Lomega$, we conclude $\Tilde{u}_j \rightarrow \psi$ in $\Lomega$ by the dominated convergence theorem.

Finally, by Lemma \ref{lemma:stability}, $\psi$ is a viscosity solution to $\I\psi + V\psi = 0$ in $\R^N$. Since $\psi(x_0) =1$, again by the strong minimum principle we must have $\psi >0$ everywhere.
\end{proof}

We now turn to the proof of our main theorem.

\begin{proof}[Proof of Theorem \ref{theo:landis}]
First, let us assume $I$ is of the form $Iu(x) = \sup_{a\in \A} L_au(x)$. In this case, for any $u,v$
\[I(u-v) \geq Iu-Iv.\]
Denote also $I^*u = \inf_{a\in \A} L_a u$.

Let $\psi>0$ a positive viscosity solution given by Proposition \ref{prop:positivesol}, which we normalize to have $\|\psi\|_{\Lomega} = 1$. Note that, for any $R\geq 1$, by Theorem \ref{theo:harnack} we have
\begin{equation} 
\label{eq:decay_psi}
1 = \|\psi\|_{\Lomega} \leq C R^{N+2s}\inf_{B_R}\psi .
\end{equation}

Let $\delta >0 $ be arbitrary. By hypothesis, there exists $R\geq 1$ such that
\begin{equation}
\label{eq:decay_u}
|u(x)|\leq \delta |x|^{-(N+2s)},\textrm{ for all } |x|\geq R,
\end{equation}
so, for any $x\in \R^N \setminus B_R$, we can replace $|x| = R$ in \eqref{eq:decay_psi} and combine it with \eqref{eq:decay_u} to obtain
\begin{align*}
|u(x)|&\leq C\delta \inf_{B_{|x|}} \psi \\
&\leq C\delta \psi(x),
\end{align*} 
so we have $|u|\leq C\delta\psi$ outside $B_R$.

Defining $w_+(x) = u - C\delta \psi$, we have $w_+ \in C(\overline{B_R})\cap \Lomega$, $w_+ \leq 0$ outside $B_R$ and
\begin{align*}
Iw_+ + Vw_+ &\geq Iu + Vu -C\delta (I\psi  + V\psi) = 0 ~ \text{ in }B_R
\end{align*}
in the viscosity sense. We then apply the maximum principle to get $w_+\leq 0$ in $\R^N$, which means that $ u \leq C\delta \psi$ in $\R^N$. Taking $\delta \searrow 0$ yields $u \leq 0$. 

Next, we observe that $-u$ satisfies $I^*(-u) +V(-u) = 0$ in the viscosity sense. Moreover, $-u \geq 0$ in $\R^N$, therefore we can apply Theorem \ref{theo:harnack} to obtain
\begin{align*}
\|u\|_{\Lomega} \leq CR^{N+2s} \inf_{B_R} (-u) .
\end{align*}
If $u\not \equiv 0$, then $\|u\|_{\Lomega} \neq 0$, so actually 
\begin{align*}
 C^{-1}\|u\|_{\Lomega}R^{-(N+2s)} \leq \inf_{B_R}(-u) = \inf_{B_R} |u|
\end{align*}
which is a contradiction with hypothesis \eqref{H}. Thus $u\equiv 0$  in $\R^N$. 

Assume now that $I$ is of the form $Iu = \inf_{a\in \A} L_a u$. It holds that
\[I(u+v) \geq Iu + Iv.\]

Let $\varphi$ be the solution obtained by applying Proposition \ref{prop:positivesol} to the operator $\tilde{F}[u] := -F[-u]$, where $F=\I +V$, normalized such that $\|\varphi\|_{\Lomega} = 1$. Whence we apply Theorem \ref{theo:harnack} and get the decay estimate \eqref{eq:decay_psi} for $\varphi$. 

Proceeding as in the sup case, we derive, for every $\delta >0$, that $|u| \leq C\delta \psi$ in $\R^N \setminus B_R$ for $R>0$ sufficiently large. Defining $w_+ = u-C\delta\varphi$, it holds
 \begin{align*}
Iw_+ + Vw_+ &\geq Iu  + I(-C\delta \varphi) + Vu - C\delta V\varphi \\
&= -C\delta(-I[-\varphi] + V\varphi) = 0
\end{align*}
in $B_R$, in the viscosity sense. By the maximum principle, $w_+ \leq 0$ in $\R^N$, and so $u\leq C\delta\varphi$ in $\R^N$. Taking $\delta \searrow 0$ we obtain $u\leq 0$, and by replicating the former argument we conclude $u\equiv 0$ in $\R^N$. 
\end{proof}

\printbibliography

\end{document}